\newtheorem{theorem}{Theorem}[section]
\newtheorem{lemma}[theorem]{Lemma}
\newtheorem{proposition}[theorem]{Proposition}
\newtheorem{corollary}[theorem]{Corollary}
\theoremstyle{definition}
\DeclareMathOperator{\Soc}{Soc}
\DeclareMathOperator{\rank}{rank}
\DeclareMathOperator{\comm}{comm}
\DeclareMathOperator{\tr}{tr}
\DeclareMathOperator{\spann}{span}
\DeclareMathOperator{\Rad}{Rad}
\DeclareMathOperator{\QN}{QN}
\def\qne{quasinilpotent equivalent}
\def\ro{{\mathcal F}_1\,}
\def\fr{\mathcal F\,}
\numberwithin{equation}{section}
\begin{document}

	\title[]{Finite spectra and quasinilpotent equivalence in Banach algebras}
	\author{R. Brits and H. Raubenheimer}
	\address{Department of Mathematics, University of Johannesburg, South Africa}
	\email{rbrits@uj.ac.za, heinrichr@uj.ac.za }
	\subjclass[2010]{46H05, 46H10}
	\keywords{finite rank elements, quasinilpotent equivalence, normal elements}

\begin{abstract}
This paper further investigates the implications of
quasinilpotent equivalence for (pairs of) elements belonging to the socle of a semisimple Banach algebra. Specifically, not only does quasinilpotent equivalence of two socle elements imply spectral equality, but also the trace, determinant and spectral multiplicities of the elements must agree. It is hence shown that quasinilpotent equivalence is established by a weaker formula (than that of the spectral semidistance). More generally, in the second part, we show that two elements possessing finite spectra are quasinilpotent equivalent if and only if they share the same set of Riesz projections. This is then used to obtain further characterizations in a number of general, as well as more specific situations. Thirdly, we show that the ideas in the preceding sections turn out to be useful in the case of $C^*$-algebras, but now for elements with infinite spectra; we give two results which may indicate a direction for further research.
\end{abstract}
	\parindent 0mm
	
	\maketitle

\section{Introduction}\label{sec1}

This paper is a continuation of work done in \cite{raub} and \cite{razpet}. The notion of quasinilpotent equivalence for linear operators is due to Colojoar\v a and  Foia\c s ~\cite{c+f, c+f2}. Their ideas have been extended to general Banach algebras by Razpet in ~\cite{razpet}.

Throughout this paper $A$ is a Banach algebra with unit $1$ over the field $\mathbb C$ of complex numbers. The spectrum of $a\in A$ will be denoted by $\sigma (a,A)$, the ``nonzero" spectrum, $\sigma (a,A)\backslash\{0\}$, by $\sigma^\prime(a,A)$, and the spectral radius of  $a\in A$ by $r(a,A)$. Whenever there is no ambiguity we shall drop the $A$ in $\sigma$ and $r$. An element $a\in A$ is said to be \emph{quasinilpotent} if $\sigma (a)=\{0\}$, equivalently $\lim_n\| a^n\|^{1/n}=0$. The set of quasinilpotent elements is denoted by $\QN(A)$, and the Jacobson radical, a subset of $\QN(A)$, by $\Rad(A)$. If $x\in A$, then $\comm(x)$ is the commutant of $x$. For $n\in\mathbb N$ we denote the algebra consisting of all $n\times n$ complex matrices by $M_n(\mathbb C)$. Finally, if $x\in A$ then by convention $x^0=1$.
\par
For each $a,b\in A$ associate operators $L_a,R_b$ and $C_{a,b}$ acting on $A$ by
the relations
\[L_ax=ax, \quad R_bx=xb\quad \hbox{and}\quad C_{a,b}x=(L_a-R_b)x\]
for all $x\in A$. It is easy to see that $L_a,R_b$ and $C_{a,b}$  are bounded linear operators on $A$, i.e., $L_a,R_b,C_{a,b}\in{\mathcal L} (A)$.

\section{Quasinilpotent equivalence}\label{sec2}

Let $a,b\in A$. Since the operators $L_a$ and $R_b$ commute
\begin{eqnarray}
C_{a,b}^nx=\sum_{k=0}^n(-1)^k{n \choose k}a^{n-k}xb^k
\end{eqnarray}
for all $x\in A$. We have
\begin{eqnarray}\label{comb}
C_{a,b}^{n+1}x=a(C_{a,b}^nx)-(C_{a,b}^nx)b,
\end{eqnarray}
and also, if $c\in A$, one can prove ~(\cite{f+v})
\begin{eqnarray}\label{xy}
C_{a,b}^{n}(xy)=\sum_{k=0}^n{n \choose k}(C_{a,c}^{n-k}x)(C_{c,b}^ky)
\end{eqnarray}
for all $x,y\in A$. A straightforward computation shows that for each $\lambda\in\mathbb C$
\begin{eqnarray}\label{exp}
e^{{\lambda}a}e^{-{\lambda}b}=\sum_{j=0}^\infty\frac{{\lambda}^jC_{a,b}^j1}{j!}.
\end{eqnarray}
Define $\rho:A\times A\rightarrow \mathbb R$ by
\begin{eqnarray}\label{rhoab}
\rho (a,b)=\limsup_n\| C_{a,b}^n1\| ^{1/n}.
\end{eqnarray}
We note  that the precise relationship between $\rho (a,b)$ and $\rho (b,a)$ is apparently unknown. If, however, $a$ and $b$ commute then by \eqref{rhoab} $\rho (a,b)=\rho (b,a)=r(a-b)$.
Now define
\begin{eqnarray}
d(a,b)=\max \{\rho (a,b),\rho (b,a)\}.
\end{eqnarray}
The identity \eqref{xy} can be used to prove that the function $d$ is a semimetric on $A$, called the \emph{spectral semidistance }from $a$ to $b$. In general $d$ is not a metric on $A$, with pathologies already evident on pairs of elements belonging to $\QN(A)$ (see the remarks preceding Proposition 2.2 in ~\cite{razpet} or Proposition~\ref{quasi} of the present paper). Following ~\cite{razpet}, elements $a,b\in A$ are called \emph{\qne} if $d(a,b)=0$. If $a$ and $b$ are quasinilpotent equivalent then $\sigma(a)=\sigma(b)$  (see \cite[Theorem 2.1]{raub}), a property which will be used throughout this paper without further reference. The semimetric $d$ seems worthwhile studying because it is really an extension of the spectral radius: For each $a\in A$ we have $d(a,0)=r(a)$.

Proposition~\ref{quasi} is a generalization of the fact that any two elements belonging to $\QN(A)$ are quasinilpotent equivalent \cite{raub}. Lemmas ~\ref{qc} and ~\ref{powers} will be used in forthcoming results.
\begin{proposition}\label{quasi} Let $A$ be a Banach algebra, $a\in A$, and suppose $q\in A$ is quasinilpotent. Then
	$$\rho(a,q)=\rho(q,a)=r(a).$$
\end{proposition}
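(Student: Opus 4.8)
The plan is to exploit the binomial expansion \eqref{comb}-type formula for $C_{a,q}^n 1$ together with the subadditivity/submultiplicativity of the spectral radius under commuting perturbations — except that $a$ and $q$ need not commute, so the real work is in controlling the norms $\|C_{a,q}^n 1\|^{1/n}$ directly. Since $C_{a,q}^n 1 = \sum_{k=0}^n (-1)^k \binom{n}{k} a^{n-k} q^k$, one natural idea is to estimate $\|C_{a,q}^n 1\| \le \sum_k \binom{n}{k}\|a^{n-k}\|\,\|q^k\|$ and try to show the $n$th root tends to $r(a)$; here quasinilpotence of $q$ gives $\|q^k\|^{1/k}\to 0$, which should kill all the terms where $k$ grows linearly in $n$, leaving the dominant contribution from small $k$, i.e. roughly $\|a^n\|^{1/n}\to r(a)$. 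The lower bound $\rho(a,q)\ge r(a)$ should follow from a cleaner argument, most likely via \eqref{exp}: $e^{\lambda a}e^{-\lambda q} = \sum_j \lambda^j C_{a,q}^j 1/j!$, so the spectral radius of this entire-function-valued expression is governed by $\rho(a,q)$; and since $\sigma(e^{\lambda a}e^{-\lambda q}) \supseteq$ something comparable to $e^{\lambda\sigma(a)}$ (because multiplying by $e^{-\lambda q}$, an element with spectrum $\{1\}$, should not shrink the spectral radius — this needs the commutant/quasinilpotent structure), one recovers $e^{|\lambda| r(a)} \le$ (growth of the power series) and hence $r(a)\le \rho(a,q)$.

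Concretely, the steps I would carry out are: (1) Prove $\rho(a,q)\le r(a)$ by the binomial estimate above — fix $\varepsilon>0$, choose $N$ so that $\|a^m\|\le (r(a)+\varepsilon)^m$ and $\|q^m\|\le \varepsilon^m$ for $m\ge N$, split the sum $\sum_{k=0}^n\binom nk\|a^{n-k}\|\|q^k\|$ at $k=N$ and at $n-k=N$, bound the bulk term by $\sum_k \binom nk (r(a)+\varepsilon)^{n-k}\varepsilon^k = (r(a)+2\varepsilon)^n$, and absorb the $O(n^N)$ boundary terms into the $n$th root. (2) Prove $\rho(a,q)\ge r(a)$: from \eqref{exp}, for each $\lambda$, $r(e^{\lambda a}e^{-\lambda q}) \le \sum_j |\lambda|^j \|C_{a,q}^j 1\|/j!$, and the right side is an entire function of $|\lambda|$ of order $\le 1$ whose type is exactly $\rho(a,q)$; on the other hand I claim $r(e^{\lambda a}e^{-\lambda q}) \ge e^{|\lambda| r(a)}$ for suitable $\lambda$, because $\sigma(e^{\lambda a}) \subseteq \sigma(e^{\lambda a}e^{-\lambda q})\cdot\sigma(e^{\lambda q})^{-1}$... — this last inclusion is false in general, so instead I would argue via the subharmonicity/spectral-radius formula: $r(e^{\lambda a}e^{-\lambda q})$ is a subharmonic function of $\lambda$ dominated by the power series, while near $\lambda = 0$ one uses that $e^{-\lambda q}$ is a small perturbation of $1$ to get $r(e^{\lambda a}e^{-\lambda q}) = r(e^{\lambda a})(1+o(1)) = e^{|\lambda| r(a)}(1+o(1))$; comparing the exponential type on both sides yields $\rho(a,q)\ge r(a)$. (3) The identical argument with $a$ and $q$ interchanged in $C$, noting $\|C_{q,a}^n 1\| \le \sum_k\binom nk\|q^{n-k}\|\|a^k\|$, gives $\rho(q,a)=r(a)$ as well, using again that $\|q^m\|^{1/m}\to 0$ forces the $\|a^k\|$ with $k$ close to $n$ to dominate — symmetric to step (1) — together with a matching lower bound from \eqref{exp} written as $e^{\lambda q}e^{-\lambda a}$.

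The main obstacle I anticipate is the lower bound, and specifically making rigorous the claim that perturbing $e^{\lambda a}$ by the factor $e^{-\lambda q}$ (with $q$ quasinilpotent but \emph{not} commuting with $a$) does not decrease the spectral radius below $e^{|\lambda| r(a)}$ asymptotically. One might hope quasinilpotents behave "invisibly" for the spectral radius, but that is only literally true in the commuting case; in general one must fall back on the exponential-type comparison, i.e. show that the entire function $\lambda\mapsto r(e^{\lambda a}e^{-\lambda q})$ has type at least $r(a)$. A clean route: by Vesentini's theorem this function is the modulus-type bound of a subharmonic function, and one can evaluate its type by testing along a ray $\lambda = t\mu_0$ where $\mu_0$ points at a peripheral point of $\sigma(a)$; along that ray, $\sigma(e^{t\mu_0 a})$ contains a point of modulus $e^{tr(a)}$, and a Gelfand-type argument on the (closed, commutative) subalgebra generated by suitable limits — or simply the fact that $r(e^{t\mu_0 a}e^{-t\mu_0 q}) \ge |{\rm tr}$-like spectral data$|$ that matches — forces type $\ge r(a)$. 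If that subharmonic argument proves delicate, the fallback is to prove only $\rho(a,q)\ge r(a)$ via the coarser inequality $\|C_{a,q}^n 1 + \binom n0\text{(top term)}\| \ge \|a^n\| - \sum_{k\ge 1}\binom nk\|a^{n-k}\|\|q^k\|$ and show the subtracted tail is $o(r(a)^n)$ by the same splitting as in step (1), so that $\limsup_n\|C_{a,q}^n 1\|^{1/n} \ge \limsup_n\|a^n\|^{1/n} = r(a)$ — this is the most elementary path and I would try it first.
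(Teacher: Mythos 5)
Your upper bound $\rho(a,q)\le r(a)$ is fine (it amounts to proving by hand, from the binomial formula for $C_{a,q}^n1$, the triangle inequality $\rho(a,q)\le\rho(a,0)+\rho(0,q)$ that the paper simply cites from Razpet), but both of your routes to the lower bound $r(a)\le\rho(a,q)$ have gaps, and the fallback you say you would try first fails outright. The reverse-triangle estimate $\|C_{a,q}^n1\|\ge\|a^n\|-\sum_{k\ge1}\binom{n}{k}\|a^{n-k}\|\,\|q^k\|$ cannot work because the subtracted tail is not $o(r(a)^n)$: already the $k=1$ term is $n\|a^{n-1}\|\,\|q\|$, which (for instance when $\|a^n\|=r(a)^n$) is of order $n\,r(a)^{n-1}\|q\|$ and dominates the main term $\|a^n\|$ for large $n$; more generally your own splitting from step (1) only bounds the tail by roughly $(r(a)+\varepsilon+\delta)^n-(r(a)+\varepsilon)^n$, which grows like $(r(a)+\varepsilon+\delta)^n$, not like $o(r(a)^n)$. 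The primary route via exponential type is also not a proof as it stands: the spectral radius is not continuous in general, a local expansion $r(e^{\lambda a}e^{-\lambda q})=e^{|\lambda|r(a)}(1+o(1))$ near $\lambda=0$ says nothing about the type (a statement about $|\lambda|\to\infty$ along a ray), you yourself note that the spectral inclusion you wanted is false, and the ``Gelfand-type argument on suitable limits'' is a placeholder rather than an argument.

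The missing idea is to invert the expansion: instead of trying to extract $\|a^n\|$ from $\|C_{a,q}^n1\|$ by a lower norm estimate, expand $a^n$ in terms of the $C_{a,q}^j1$. Identity \eqref{xy} with $x=y=1$, $c=q$ and $b=0$ gives $a^n=C_{a,0}^n1=\sum_{k=0}^n\binom{n}{k}\bigl(C_{a,q}^{n-k}1\bigr)q^k$, hence $\|a^n\|\le\sum_{k}\binom{n}{k}\|C_{a,q}^{n-k}1\|\,\|q^k\|$, and the very same splitting argument as in your step (1) yields $r(a)\le\rho(a,q)+r(q)=\rho(a,q)$. This is exactly the triangle inequality $\rho(a,0)\le\rho(a,q)+\rho(q,0)$; the paper's proof is nothing more than two applications of Razpet's inequality, $\rho(a,q)\le\rho(a,0)+\rho(0,q)=r(a)$ and $r(a)=\rho(a,0)\le\rho(a,q)+\rho(q,0)=\rho(a,q)$, with the case of $\rho(q,a)$ handled identically, so once you replace your lower-bound attempts by this inversion your argument collapses to the paper's.
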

\begin{proof} It follows from ~\cite[p. 380 (6)]{razpet}  that
	$$\rho(a,q)\leq\rho(a,0)+\rho(0,q)=\rho(a,0)\leq\rho(a,q)+\rho(q,0)=\rho(a,q).$$
	In a similar way $\rho(q,a)=r(a)$.
\end{proof}

\begin{lemma}\label{qc} Let $A$ be a Banach algebra, $a,b\in A$, and suppose $q,r\in\QN(A)$ commute with $a$ and $b$ respectively. Then
	$$\rho(a+q,b+r)=\rho(a,b)$$
\end{lemma}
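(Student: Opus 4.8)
The plan is to mimic the proof of Proposition~\ref{quasi}, exploiting the subadditivity of $\rho$ together with the fact that $\rho$ collapses to the spectral radius on commuting pairs. Recall from \cite[p. 380 (6)]{razpet} the triangle-type inequality $\rho(x,y)\le\rho(x,z)+\rho(z,y)$, valid for all $x,y,z\in A$, and recall from the discussion following \eqref{rhoab} that $\rho(x,y)=r(x-y)$ whenever $x$ and $y$ commute.

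First I would record two auxiliary vanishing statements. Since $q$ commutes with $a$, the elements $a+q$ and $a$ commute, so $\rho(a+q,a)=\rho(a,a+q)=r(q)=0$ because $q\in\QN(A)$. Likewise, since $r$ commutes with $b$, the elements $b+r$ and $b$ commute, whence $\rho(b+r,b)=\rho(b,b+r)=r(r)=0$.

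Next, for the inequality $\rho(a+q,b+r)\le\rho(a,b)$ I would apply the triangle inequality twice, first with $z=a$ and then with $z=b$:
\[\rho(a+q,b+r)\le\rho(a+q,a)+\rho(a,b+r)\le\rho(a+q,a)+\rho(a,b)+\rho(b,b+r).\]
By the previous paragraph the first and last terms vanish, giving $\rho(a+q,b+r)\le\rho(a,b)$. For the reverse inequality I would run the same chain in the opposite direction, with $z=a+q$ and then $z=b+r$:
\[\rho(a,b)\le\rho(a,a+q)+\rho(a+q,b)\le\rho(a,a+q)+\rho(a+q,b+r)+\rho(b+r,b),\]
and again the outer terms are zero, so $\rho(a,b)\le\rho(a+q,b+r)$. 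Combining the two yields the claimed equality.

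I do not expect a genuine obstacle here beyond the bookkeeping; the only point that requires care is that the hypotheses get used exactly where needed. It is precisely $q$ commuting with $a$ (not necessarily with $b$) and $r$ commuting with $b$ (not necessarily with $a$) that make the four boundary terms $\rho(a+q,a)$, $\rho(a,a+q)$, $\rho(b,b+r)$, $\rho(b+r,b)$ reduce to spectral radii of quasinilpotent elements, hence to $0$. No commutation between $a$ and $b$, nor between $q$ and $r$, is assumed or needed.
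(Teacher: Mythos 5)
Your argument is correct and follows essentially the paper's own route: the forward inequality $\rho(a+q,b+r)\le\rho(a,b)$ is obtained by exactly the same double application of the triangle-type inequality from \cite[p.~380 (6)]{razpet} together with the commuting reduction $\rho(x,y)=r(x-y)$. The only cosmetic difference is in the reverse direction, where the paper simply reapplies the forward inequality to the pair $a+q,\,b+r$ with the quasinilpotents $-q,\,-r$, writing $\rho(a,b)=\rho((a+q)-q,(b+r)-r)\le\rho(a+q,b+r)$, while you rerun the triangle inequality directly; both are valid and rest on the same facts.
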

\begin{proof} Again from ~\cite[p. 380 (6)]{razpet}  we have that
	\begin{align*}
	\rho(a+q,b+r)&\leq\rho(a+q,a)+\rho(a,b+r)\\&=\rho(a,b+r)\\&\leq
	\rho(a,b)+\rho(b,b+r)\\&=\rho(a,b).
	\end{align*}
	It therefore also follows that
	$$\rho(a,b)=\rho((a+q)-q,(b+r)-r)\leq\rho(a+q,b+r).$$
\end{proof}

\begin{lemma}\label{powers} If $\rho(a,b)=0$ then $\rho(a^k,b^k)=0$ for each $k\in\mathbb N$.
\end{lemma}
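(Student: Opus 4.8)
The plan is to use the product formula \eqref{xy} with the specialization $x = a^{k-1}$, $y = a$ (or more symmetrically, to induct on $k$). Concretely, I would set $c = b$ in \eqref{xy} and write, for the pair $(a^k, b^k)$,
\[
C_{a^k,b^k}^n(uv) = \sum_{j=0}^n \binom{n}{j}\bigl(C_{a^k,b^k}^{n-j}u\bigr)\bigl(C_{b^k,b^k}^{j}v\bigr)
\]
is not quite what is needed directly; instead the cleaner route is to first establish the one–variable fact that $\rho(a,b)=0$ forces $C_{a,b}^n 1 \to 0$ superexponentially, and then to relate $C_{a^k,b^k}^n 1$ to high powers of $C_{a,b}$ applied to suitable elements. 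The key algebraic identity to extract is a factorization of $L_{a^k} - R_{b^k}$ in terms of $L_a - R_b$: since $L_a$ and $R_b$ commute, one has
\[
L_{a^k} - R_{b^k} = (L_a - R_b)\sum_{i=0}^{k-1} L_a^{\,i} R_b^{\,k-1-i} = C_{a,b}\circ S_k,
\]
where $S_k = \sum_{i=0}^{k-1} L_a^{i} R_b^{k-1-i}$ is a bounded operator on $A$ that commutes with $C_{a,b}$ (again because $L_a, R_b$ commute with each other and hence with $C_{a,b}$). Therefore $C_{a^k,b^k} = C_{a,b}\, S_k$ as operators on $A$, and by commutativity $C_{a^k,b^k}^{\,n} = C_{a,b}^{\,n} S_k^{\,n}$.

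From this the conclusion follows quickly. I would estimate
\[
\|C_{a^k,b^k}^{\,n} 1\| = \|C_{a,b}^{\,n} S_k^{\,n} 1\| \le \|C_{a,b}^{\,n}\|_{{\mathcal L}(A)}\, \|S_k\|_{{\mathcal L}(A)}^{\,n},
\]
so that $\rho(a^k,b^k) = \limsup_n \|C_{a^k,b^k}^n 1\|^{1/n} \le \|S_k\| \cdot \limsup_n \|C_{a,b}^n\|^{1/n}$. Hence it suffices to know that $\rho(a,b)=0$ implies $\limsup_n \|C_{a,b}^n\|_{{\mathcal L}(A)}^{1/n} = 0$, i.e. that $C_{a,b}$ is a quasinilpotent operator on $A$. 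This is precisely the operator–theoretic reformulation of $\rho(a,b)=0$: indeed $\rho(a,b)=0$ says $C_{a,b}^n 1 \to 0$ superexponentially, and via \eqref{comb} together with \eqref{xy} (applied to $x = C_{a,b}^m 1$ and general $y$, or by the standard argument that $C_{a,b}^n x = \sum \binom{n}{j}(C_{a,b}^{n-j}1)\,(C_{0,0}^j x)\cdot(\dots)$) one upgrades pointwise superexponential decay at $1$ to $\|C_{a,b}^n\|^{1/n}\to 0$; this is the content of the fact that $d(a,b)=0$ is equivalent to $C_{a,b}\in\QN({\mathcal L}(A))$, which is implicit in \cite{razpet} and \cite{raub}.

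The main obstacle I anticipate is the last point: passing from $\rho(a,b)=0$ (decay of $\|C_{a,b}^n 1\|^{1/n}$) to quasinilpotency of the full operator $C_{a,b}$ on $A$. If one wants to avoid invoking that equivalence, the cleanest self–contained alternative is to prove the lemma by induction on $k$ using \eqref{xy} directly: assuming $\rho(a^{k-1},b^{k-1})=0$ and $\rho(a,b)=0$, apply \eqref{xy} with $c$ chosen appropriately to the product $a^{k-1}\cdot a = a^k$ and estimate $C_{a^k,b^k}^n 1 = C_{a^{k-1}\cdot a,\, b^{k-1}\cdot b}^n(a^{k-1}\cdot a)$ — but the indices $a^{k-1}, a$ on the left and $b^{k-1}, b$ on the right do not line up with the hypotheses of \eqref{xy} unless one is careful about which algebra element plays the role of the intermediate $c$. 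For this reason I expect the operator–factorization argument above (writing $C_{a^k,b^k}=C_{a,b}S_k$ with $S_k$ commuting with $C_{a,b}$) to be the shortest and most transparent path, with the only genuine input being that $\rho(a,b)=0 \iff r(C_{a,b},{\mathcal L}(A))=0$.
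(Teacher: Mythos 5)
Your factorization $C_{a^k,b^k}=C_{a,b}\,S_k$ with $S_k=\sum_{i=0}^{k-1}L_a^{\,i}R_b^{\,k-1-i}$ commuting with $C_{a,b}$ is exactly the paper's device, but you then estimate in the wrong order: bounding $\|C_{a,b}^{\,n}S_k^{\,n}1\|$ by $\|C_{a,b}^{\,n}\|_{{\mathcal L}(A)}\,\|S_k\|^n$ forces you to control the operator norm $\|C_{a,b}^{\,n}\|^{1/n}$, and the ``only genuine input'' you then invoke --- that $\rho(a,b)=0$ is equivalent to $r(C_{a,b},{\mathcal L}(A))=0$ --- is false. The hypothesis $\rho(a,b)=0$ controls the orbit of the single vector $1$ only; the local-spectral fact quoted in the paper (\cite[Corollary 34.5]{muller}) requires $\|C_{a,b}^{\,n}x\|^{1/n}\to 0$ for \emph{every} $x\in A$. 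Concretely, take $a=b$ with $\#\sigma(a)\geq 2$, say $a=\mathrm{diag}(0,1)$ in $M_2(\mathbb C)$: then $C_{a,a}1=0$, so $\rho(a,a)=0$ trivially, yet $C_{a,a}e_{12}=-e_{12}$, so $-1\in\sigma(C_{a,a})$ and $C_{a,a}$ is not quasinilpotent. (In general $\sigma(C_{a,b})$ can fill out the difference set $\sigma(a)-\sigma(b)$, so quasinilpotence of $C_{a,b}$ is a far stronger condition than $d(a,b)=0$.) As written, the step from $\rho(a,b)=0$ to $\limsup_n\|C_{a,b}^{\,n}\|^{1/n}=0$ is a genuine gap that cannot be filled.

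The repair is immediate and is precisely what the paper does: since $S_k$ commutes with $C_{a,b}$, write $C_{a^k,b^k}^{\,n}1=S_k^{\,n}\bigl(C_{a,b}^{\,n}1\bigr)$ and estimate $\|S_k^{\,n}C_{a,b}^{\,n}1\|\leq\|S_k^{\,n}\|\,\|C_{a,b}^{\,n}1\|$, so that taking $n$-th roots and limsups gives $\rho(a^k,b^k)\leq r(S_k)\,\rho(a,b)=0$. This uses only the vector decay $\|C_{a,b}^{\,n}1\|^{1/n}\to 0$ supplied by the hypothesis, together with the finiteness of $r(S_k)$. With that one-line reordering your argument coincides with the paper's proof, and the inductive alternative via \eqref{xy} that you sketch becomes unnecessary.
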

\begin{proof} Fix $k\in\mathbb N$.
	Since $L_a$ and $R_b$ commute factorization gives $$C_{a^k,b^k}=L_{a^k}-R_{b^k}=L^k_a-R^k_b=P(L_a,R_b)\Bigl(L_a-R_b\Bigr)$$
	where $P:=P(L_a,R_b)$ is an operator commuting with $L_a$ and $R_b$. It thus follows that
	$\rho(a^k,b^k)\leq r(P)\rho(a,b)$.
\end{proof}

\section{Finite rank elements and quasinilpotent equivalence}\label{sec3}

In this section we shall require that $A$ be a \emph{semiprime} Banach algebra, i.e., $xAx=\{0\}$ implies $x=0$ holds for all $x\in A$. It can be shown that all semisimple Banach algebras are semiprime. Following Puhl ~\cite{puhl} we call an element $0\not=a\in A$ \emph{rank one} if $aAa\subseteq {\mathbb C}a$. Denote the set of these elements by $\ro$. By ~\cite[Lemma 2.7]{puhl} $ \ro A,\, A\ro=\,\ro.$ A projection (idempotent) belonging to $\ro$ is called a \emph{minimal projection}. Let $\fr$ denote the set of all $u\in A$ of the form $u=\sum_{i=1}^nu_i$ with $u_i\in \ro$. We call $\fr $the set of \emph{finite rank} elements of $A$. $\fr$ is a two sided ideal in $A$ and it coincides with the socle of $A$, i.e., $ \Soc(A)\,=\,\fr$. If $a\in\Soc(A)$, then $\sigma(a)$ is a finite set and hence, corresponding to $\alpha\in\sigma(a)$, one can find a small circle $\Gamma_{\alpha}$ isolating $\alpha$ from the remaining spectrum of $a$. We denote by
\[p(\alpha,a)=\frac{1}{2{\pi}i}\int_{\Gamma_{\alpha}}(\lambda -a)^{-1}\,d\lambda\] the \emph{Riesz projection} associated with $a$ and $\alpha$. If $\alpha\notin\sigma(a)$, then, by Cauchy's Theorem, $p(\alpha,a)=0$.  Recall that $p(\alpha,a)$ belongs to the bicommutant of $a$.
\par
For another approach to rank one and finite rank elements see ~\cite{a+m,harte}; if $A$ is a semisimple Banach algebra then the notion of rank one and finite rank elements in the sense of Puhl ~\cite{puhl} coincides with the notion of rank one and finite rank elements in the sense of Aupetit and Mouton (~\cite[Theorem 4]{harte} and ~\cite[Theorem 2.12]{a+m}).
\par
Let $A$ be a semiprime Banach algebra and $a,b\in A$. Suppose $a,b\in\ro$ and $d(a,b)=0$. If $a\in\,$QN($A$) then $\sigma (a)=\sigma (b)=\{0\}$. In view of ~\cite[Section 2 and Lemma 2.8]{puhl} $a^2=b^2=0$. If we suppose $a,b\in\fr$, $d(a,b)=0$ and $a\in\,$QN($A$), then again $\sigma (a)=\sigma (b)=\{0\}$. Now, in view of ~\cite[Lemma 3.10]{m+r}, there is a natural number $m$ such that $a^m=b^m=0$.\par
Let $A$ be a semisimple Banach algebra and $a\in A$. Following Aupetit and Mouton  we define the \emph{rank} of $a$ by

\begin{equation}\label{rank}
\rank(a)=\sup_{x\in A}\# \sigma^\prime(xa)\leq\infty.
\end{equation}
where the symbol $\#K$ denotes the number of distinct elements in a set $K\subseteq \mathbb C$. With respect to ~\eqref{rank} recall that Jacobson's Lemma says $\sigma^\prime(xa)=\sigma^\prime(ax)$.
If $x\in A$ is such that $\#\sigma^\prime(xa)=\rank(a)$, then we say \emph{$a$ assumes its rank at $x$}. Useful
in this regard is the fact that, for each $a\in\Soc(A)$, the set
\begin{equation}\label{assume}
E(a)=\{x\in A:\#\sigma^\prime(xa)=\rank(a)\}
\end{equation}
is dense and open in $A$.
For $a\in\Soc(A)$, Aupetit and Mouton define the \emph{trace} and \emph{determinant} as:
\begin{equation}\label{trace}
\tr(a)=\sum\limits_{\lambda\in\sigma(a)}\lambda\, m(\lambda,a)
\end{equation}
\begin{equation}\label{determinant}
\det(1+a)=\prod\limits_{\lambda\in\sigma(a)}(1+\lambda)^{m(\lambda,a)}
\end{equation}
where $m(\lambda,a)$ is the \emph{multiplicity of $a$ at $\lambda$}. A brief description of the notion of multiplicity in the abstract case goes as follows (for particular details one should consult \cite{a+m}): Let $a\in\Soc(A)$, $\lambda\in\sigma(a)$ and let $V_\lambda$ be an open disk centered at $\lambda$ such that $V_\lambda$ contains no other points of $\sigma(a)$. It can be shown \cite[p.119--120]{a+m} that there exists an open ball, say $U\subset A$, centered at $1$ such that $\#\left[\sigma(xa)\cap V_\lambda\right]$ is constant as $x$ runs through $E(a)\cap U$. This constant integer is the multiplicity of $a$ at $\lambda$.

In the operator case, $A=\mathcal L(X)$, where $X$ is a Banach space, the ``spectral" rank, trace and determinant all coincide with the respective classical operator definitions.

To develop the results in this section we need the following basic:
\begin{lemma}\label{lemma}
	Let $X$ be a finite dimensional normed space with basis $\{e_1,\ldots ,e_k\}$. If $T:X\to X$ is a linear operator such that for each $j$
	\[\limsup_n\| T^n e_j\|^{1/n}=0,\]
	then $T$ is nilpotent.
\end{lemma}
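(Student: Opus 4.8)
The plan is to show that $T$ acts as a nilpotent operator by proving its spectrum (as an operator on the finite-dimensional space $X$) is $\{0\}$, which on a finite-dimensional space is equivalent to nilpotency. Since $X$ is finite-dimensional, every eigenvalue of $T$ is realised: if $\mu$ is an eigenvalue with eigenvector $v \neq 0$, then $T^n v = \mu^n v$, so $\limsup_n \|T^n v\|^{1/n} = |\mu|$. Thus I would like to deduce from the hypothesis on the basis vectors that $\limsup_n \|T^n v\|^{1/n} = 0$ for \emph{every} $v \in X$, not just the basis vectors.

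The key step is therefore to upgrade the hypothesis from basis vectors to arbitrary vectors. Write $v = \sum_{j=1}^k c_j e_j$. Then $\|T^n v\| \le \sum_{j=1}^k |c_j|\, \|T^n e_j\|$. Using the elementary inequality $(x_1 + \cdots + x_k)^{1/n} \le k^{1/n} \max_j x_j^{1/n}$ for nonnegative reals, together with $k^{1/n} \to 1$, one gets
\[
\limsup_n \|T^n v\|^{1/n} \le \max_{1 \le j \le k} \limsup_n \bigl(|c_j|\,\|T^n e_j\|\bigr)^{1/n} = \max_{1 \le j \le k} \limsup_n \|T^n e_j\|^{1/n} = 0,
\]
since $|c_j|^{1/n} \to 1$ for each nonzero $c_j$ (and terms with $c_j = 0$ drop out). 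Hence $T$ has no nonzero eigenvalue.

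Finally, because $X$ is a finite-dimensional complex normed space (the scalar field is $\mathbb{C}$ throughout the paper), $T$ has at least one eigenvalue, and all eigenvalues are $0$; equivalently the characteristic polynomial is $\lambda^k$, so by the Cayley--Hamilton theorem $T^k = 0$ and $T$ is nilpotent. (Alternatively, one can invoke that $r(T) = \limsup_n \|T^n\|^{1/n}$ and bound $\|T^n\| \le \sum_j \|T^n e_j\| \cdot C$ for a constant $C$ depending on the choice of basis and norm, giving $r(T) = 0$ directly; on a finite-dimensional space $r(T) = 0$ forces nilpotency.) I do not anticipate a serious obstacle here; the only mild subtlety is the manipulation of $\limsup$ through finite sums and scalar multiples, which is the routine inequality noted above. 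One should just take care that the reduction to eigenvectors genuinely uses finite-dimensionality (existence of an eigenvalue over $\mathbb{C}$), since the statement is false for general Banach spaces.
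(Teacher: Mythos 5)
Your argument is correct, but your main route differs from the paper's: the paper fixes a constant $c$ with $|\alpha_1|+\cdots+|\alpha_k|\leq c$ for all $x=\alpha_1e_1+\cdots+\alpha_ke_k$ in the unit ball (equivalence of norms in finite dimensions), pushes the $n$-th-root limsup through this uniform bound to get $\|T^n\|^{1/n}\to 0$, i.e.\ $T$ quasinilpotent, and then concludes nilpotency from $\dim X<\infty$. You instead upgrade the hypothesis pointwise --- showing $\limsup_n\|T^nv\|^{1/n}=0$ for every fixed $v$, which needs no uniform constant since the coefficients of $v$ are fixed --- and then conclude via eigenvectors: any eigenvalue $\mu$ satisfies $|\mu|=\limsup_n\|T^nv\|^{1/n}=0$, so over $\mathbb{C}$ the characteristic polynomial is $\lambda^k$ and Cayley--Hamilton gives $T^k=0$. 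This is precisely the ``spectral argument, avoiding the norm'' that the authors mention as an alternative in the remark immediately after their proof, while your parenthetical alternative (bounding $\|T^n\|$ by $C\sum_j\|T^ne_j\|$ and using $r(T)=0$) is essentially the paper's actual proof. What your main route buys is that it isolates where finite-dimensionality is really used (existence of an eigenvalue over $\mathbb{C}$ and the passage from spectrum $\{0\}$ to nilpotency), as you correctly flag; what the paper's route buys is a one-line passage from the hypothesis to quasinilpotency of $T$ as an operator, which matches the local-spectral-theory remark they make next. Both hinge on the same elementary manipulation of $\limsup$ of $n$-th roots through finite sums, and both are complete.
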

\begin{proof} Since dim$X\,<\infty$ there is a constant $c>0$ such that for each $x=\alpha_1e_1+\ldots +\alpha_ke_k$ with $\|x\|\leq 1$ one has that $|\alpha_1|+\ldots +|\alpha_k|\leq c$. Thus
\begin{eqnarray*}
	\limsup_n[\sup_{\|x\|\leq 1}\| T^nx\|^{1/n}]&\leq&\limsup_n[c^{1/n}\max_j\| T^ne_j\| ^{1/n}]\\
	&\leq&\sum_{j=0}^k\limsup_n[c^{1/n}\| T^ne_j\| ^{1/n}]\\                                                                                                                                 &=&0.
\end{eqnarray*}
So the hypothesis implies that
\begin{eqnarray*}
	\lim_{n\to\infty}\| T^n\|^{1/n}=0,
\end{eqnarray*}
which means that $T$ is quasinilpotent. But, since dim$\,X<\infty$, $T$ is in fact nilpotent.
\end{proof}
As one would expect, Lemma~\ref{lemma} can also be proved via spectral arguments, avoiding the norm altogether. Regarding Lemma \ref{lemma}, recall that it follows from local spectral theory, that if for arbitrary $x$ in a Banach space $X$, $\|T^nx\|^{1/n}\to 0$, then $T\in {\mathcal L}(X)$ is quasinilpotent \cite[Corollary 34.5]{muller}. We proceed to show that, for elements belonging to $\Soc(A)$, quasinilpotent equivalence is implied by the formally weaker requirement $\rho(a,b)=0$. Moreover, if this is the case, then one does not merely have $\sigma(a)=\sigma(b)$, but also that the multiplicities, $m(\lambda,a)$ and $m(\lambda,b)$, corresponding to nonzero spectral points $\lambda$ coincide. We first need to establish

\begin{theorem}\label{tracethm} Let $A$ be a semisimple Banach algebra and $a,b\in  \Soc(A)$. Then \[\rho(a,b)=0\Rightarrow \tr(a)=\tr(b).\]
\end{theorem}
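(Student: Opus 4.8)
The plan is to reduce the statement to a finite-dimensional linear-algebra computation by working inside a suitable finite-dimensional subspace of $A$ on which the commutator operator $C_{a,b}$ acts, and then to relate the trace of $a$ (respectively $b$) to an eigenvalue count. Since $a,b\in\Soc(A)$ both have finite spectra, the Riesz projections $p(\alpha,a)$ and $p(\alpha,b)$ are defined for $\alpha\in\sigma(a)$, $\alpha\in\sigma(b)$, and by the formula~\eqref{trace} we have $\tr(a)=\sum_{\alpha\in\sigma(a)}\alpha\,m(\alpha,a)$. The first step is therefore to express each multiplicity $m(\alpha,a)$ in a way that can be compared across $a$ and $b$; the natural candidate is the rank of the associated Riesz projection, i.e. $m(\alpha,a)=\rank\bigl(p(\alpha,a)\bigr)$ (this should follow from the Aupetit--Mouton machinery already invoked in the excerpt, since the trace is additive over the spectral decomposition and $\tr(\alpha\,p(\alpha,a))=\alpha\,\rank(p(\alpha,a))$). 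So it suffices to show that $\rho(a,b)=0$ forces $\rank(p(\alpha,a))=\rank(p(\alpha,b))$ for every $\alpha$, and in particular $\sigma(a)=\sigma(b)$.

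Next I would exploit that $\rho(a,b)=0$ means $\limsup_n\|C_{a,b}^n 1\|^{1/n}=0$, and more generally, via Lemma~\ref{powers}, that $\rho(a^k,b^k)=0$, hence $\|C_{a^k,b^k}1\|^{1/n}=\|a^k-\text{(lower order)}\|$-type estimates give $\limsup_n\|C_{a,b}^n(a^j)\|^{1/n}=0$ for all $j$ by expanding $C_{a,b}^n(a^j)$ using~\eqref{xy} with $c$ chosen appropriately (e.g.\ writing $a^j=a^{j-1}\cdot a$ and iterating, or directly using that $C_{a,b}(a^j b^i)$ stays in the span of such monomials). The key point is that the linear span $W$ of the elements $\{a^j b^i : i,j\ge 0\}$ together with $1$ is finite-dimensional, because $a$ and $b$ satisfy polynomial identities on $\Soc(A)$-elements (their minimal polynomials have bounded degree since the spectra are finite and, modulo $\QN$, they are "algebraic"), and $W$ is invariant under $C_{a,b}$. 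On a spanning set of $W$ one checks $\limsup_n\|C_{a,b}^n w\|^{1/n}=0$; then Lemma~\ref{lemma} applies to the restriction $C_{a,b}|_W$, so $C_{a,b}|_W$ is nilpotent.

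From nilpotence of $C_{a,b}|_W$ one extracts spectral information: $e^{\lambda a}e^{-\lambda b}=\sum_{j\ge0}\frac{\lambda^j C_{a,b}^j 1}{j!}$ from~\eqref{exp} is then a \emph{polynomial} in $\lambda$ with coefficients in $W$, so $e^{\lambda a}=\bigl(\text{polynomial in }\lambda\bigr)e^{\lambda b}$. Taking the Aupetit--Mouton trace of both sides (the trace is a bounded linear functional on $\Soc(A)$, multiplicative-like behaviour under the exponential gives $\tr(e^{\lambda a}) = \sum_\alpha m(\alpha,a)e^{\lambda\alpha}$, an exponential sum), one obtains an identity between two exponential sums $\sum_{\alpha\in\sigma(a)} m(\alpha,a)e^{\lambda\alpha}$ and $\sum_{\beta\in\sigma(b)} m(\beta,b)e^{\lambda\beta}$ valid for all $\lambda$ up to a polynomial-in-$\lambda$ factor arising from the nilpotent part; comparing growth/exponents forces $\sigma(a)=\sigma(b)$ and $m(\alpha,a)=m(\alpha,b)$, whence $\tr(a)=\tr(b)$. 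The main obstacle I anticipate is the bookkeeping in the second step: verifying carefully that the relevant span $W$ is genuinely finite-dimensional and $C_{a,b}$-invariant, and that $\limsup_n\|C_{a,b}^n w\|^{1/n}=0$ holds on all of $W$ and not just on $1$ — this is where Lemma~\ref{powers} and the multinomial identity~\eqref{xy} must be combined with the algebraic (finite-spectrum) nature of $a$ and $b$. Once $C_{a,b}|_W$ is known to be nilpotent, the passage to the trace identity via~\eqref{exp} is comparatively routine.
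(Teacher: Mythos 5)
The first half of your plan is essentially the paper's argument: since socle elements are algebraic, $X=\spann\{a^mb^n:m,n\geq 0\}$ is finite dimensional and $C_{a,b}$-invariant, and Lemma~\ref{lemma} yields nilpotence of $C_{a,b}$ on a suitable invariant subspace, so that $e^{\lambda a}e^{-\lambda b}=\sum_{j=0}^{N}\lambda^jC_{a,b}^j1/j!$ is a polynomial in $\lambda$ with coefficients in $A$. (In fact you only need the subspace spanned by the orbit $\{C_{a,b}^n1\}$, where the hypothesis $\rho(a,b)=0$ applies verbatim; your stronger decay claim on all monomials does hold, since \eqref{xy} with $c=a$ and then $c=b$ gives $C_{a,b}^n(a^jb^i)=a^j(C_{a,b}^n1)b^i$, but it is not needed.)

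The genuine gap is in the endgame. From $e^{\lambda a}=q(\lambda)e^{\lambda b}$ you propose to ``take the trace of both sides'' and obtain an identity between the exponential sums $\sum_{\alpha}m(\alpha,a)e^{\lambda\alpha}$ and $\sum_{\beta}m(\beta,b)e^{\lambda\beta}$ ``up to a polynomial factor''. But $q(\lambda)$ is an \emph{algebra-valued} polynomial (its coefficients $C_{a,b}^j1$ lie in $A$), not a scalar one, and the Aupetit--Mouton trace is linear but not multiplicative: $\tr\bigl(q(\lambda)e^{\lambda b}-1\bigr)$ does not factor as a polynomial times an exponential sum in the $\beta$'s --- indeed it is identically $\tr(e^{\lambda a}-1)$, so this step yields no relation whatsoever between the spectral data of $a$ and of $b$. (Two smaller issues: $e^{\lambda a}\notin\Soc(A)$, so one must work with $\tr(e^{\lambda a}-1)$; and the conclusion you aim for, equality of spectra \emph{and} of all multiplicities, is Corollary~\ref{multi}, which is stronger than the theorem and is obtained in the paper only afterwards, via determinants.) The decisive move in the paper, missing from your sketch, is to use the determinant rather than the trace: it is multiplicative and satisfies $\det(e^{\lambda a})\det(e^{-\lambda b})=e^{\lambda\tr(a-b)}$, while the uniform bound $\rank(p(\lambda))\leq\sum_{j=1}^{N}\rank(C_{a,b}^j1)=M$ gives $|\det(1+p(\lambda))|\leq\bigl(1+\|p(\lambda)\|\bigr)^M$, a polynomial bound in $|\lambda|$; hence the entire function $\lambda\mapsto e^{\lambda\tr(a-b)}$ has polynomial growth, is therefore a polynomial, hence constant, and $\tr(a)=\tr(b)$. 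Without the determinant (or some other device that interacts multiplicatively with $e^{\lambda a}e^{-\lambda b}$ and is controlled by the rank bound), the ``comparison of growth/exponents'' you envisage cannot get started.
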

\begin{proof} Since each element of $ \Soc(A)$ is algebraic it follows that the set
	$L=\{a^mb^n:m,n\in\mathbb Z^+\}$ spans a finite dimensional vector space containing $1$, $a$ and $b$. Denote $X=\spann L$. It is clear that
	the linear operator $C_{a,b}$ maps $X$ into $X$. Let $Y$ be the subspace of $X$ spanned by the orbit $\{C^n_{a,b}1:n\in\mathbb Z^+\}$.   Then, using Lemma~\ref{lemma}, the hypothesis $\rho(a,b)=0$ implies that $C_{a,b}$ is a nilpotent operator on $Y$. It follows that there is $N\in\mathbb N$ such that
	\begin{equation}\label{ef}
	e^{\lambda a}e^{-\lambda b}=\sum_{j=0}^N \frac{\lambda^j C_{a,b}^j1}{j\,!}.
	\end{equation}
	
	Notice that
	\begin{align*}
	\det(e^{\lambda a}e^{-\lambda b})&=\det(e^{\lambda a})\det(e^{-\lambda b})\\&
	=e^{\tr(\lambda a)}e^{\tr(-\lambda b)}\\&
	=e^{\lambda\tr(a-b)}.
	\end{align*}
	Write $p(\lambda)=\sum_{j=1}^N \frac{\lambda^j C_{a,b}^j1}{j\,!}$ and observe that, for each $\lambda\in\mathbb C$,
	\[\rank(p(\lambda))\leq\sum_{j=1}^N\rank(C_{a,b}^j1):=M<\infty.\]
	Combining this with \eqref{ef} we have
	\begin{align*}|e^{\lambda\tr(a-b)}|&=|\det(1+p(\lambda))|\\&
	\leq[1+r(p(\lambda)]^M\\&
	\leq[1+\|(p(\lambda)\|]^M\\&
	\leq[1+\sum\limits_{j=1}^N\frac{\|C_{a,b}^j1\|}{j\,!}|\lambda|^j]^M,
	\end{align*}
	which implies that the entire function $\lambda\mapsto e^{\lambda\tr(a-b)}$ has polynomial growth, and must therefore be a polynomial. It thus follows that $\lambda\mapsto e^{\lambda\tr(a-b)}$ is in fact constantly $1$ and hence $\tr(a-b)=0$ which completes the proof.
\end{proof}

\begin{corollary}\label{specreq}
	Let $A$ be a semisimple Banach algebra and let $a,b\in  \Soc(A)$. If $\rho(a,b)=0$ then $r(a)=r(b)$.
\end{corollary}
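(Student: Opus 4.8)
The plan is to upgrade Theorem~\ref{tracethm} by feeding it all the powers of $a$ and $b$. Since $\Soc(A)=\fr$ is a two-sided ideal we have $a^k,b^k\in\Soc(A)$ for every $k\in\mathbb N$, and Lemma~\ref{powers} turns the hypothesis $\rho(a,b)=0$ into $\rho(a^k,b^k)=0$ for each such $k$. Applying Theorem~\ref{tracethm} to the pair $(a^k,b^k)$ then gives $\tr(a^k)=\tr(b^k)$ for every $k\in\mathbb N$.

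Next I would translate this into spectral data. By the spectral mapping theorem and the compatibility of the abstract multiplicity with polynomials (\cite{a+m}), one has $\tr(a^k)=\sum_{\lambda\in\sigma(a)}\lambda^k\,m(\lambda,a)$, and since the $\lambda=0$ summand vanishes for $k\ge1$ this equals $\sum_{\lambda\in\sigma^\prime(a)}\lambda^k\,m(\lambda,a)$; likewise for $b$. Thus the two finite multisets of nonzero spectral points, each point repeated according to its multiplicity, have the same power sums of all orders $k\ge1$. Encoding this in the generating function
\[f_a(z):=\sum_{k=1}^{\infty}\tr(a^k)\,z^k=\sum_{\lambda\in\sigma^\prime(a)}m(\lambda,a)\,\frac{\lambda z}{1-\lambda z},\]
we see that $f_a$ agrees near $0$ with a rational function whose poles are precisely the points $1/\lambda$, $\lambda\in\sigma^\prime(a)$ (there is no cancellation, since the $\lambda$ are distinct and the multiplicities positive). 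Hence the radius of convergence of the power series $f_a$ equals $1/r(a)$, with the convention that this is $+\infty$ when $\sigma^\prime(a)=\emptyset$, i.e.\ $r(a)=0$. Since $\tr(a^k)=\tr(b^k)$ for all $k$, we get $f_a=f_b$, so these two power series have the same radius of convergence, and therefore $r(a)=r(b)$.

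An essentially equivalent route avoids the generating function: because $\rank(a)$ and $\rank(b)$ are finite, the multisets of nonzero eigenvalues have bounded total size, so finitely many of the equalities $\tr(a^k)=\tr(b^k)$ suffice, via Newton's identities, to force equality of all the corresponding elementary symmetric functions and hence of the multisets themselves; in particular $\sigma^\prime(a)=\sigma^\prime(b)$ with matching multiplicities, which is stronger than the corollary asks. I expect the only point requiring care is the identity $\tr(a^k)=\sum_{\lambda\in\sigma(a)}\lambda^k m(\lambda,a)$, i.e.\ that the Aupetit--Mouton trace behaves correctly under the power map on $\Soc(A)$; granting this and the finiteness of rank, everything else is bookkeeping.
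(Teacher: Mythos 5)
Your reduction is exactly the paper's: Lemma~\ref{powers} turns $\rho(a,b)=0$ into $\rho(a^k,b^k)=0$ for all $k$, and Theorem~\ref{tracethm} then yields $\tr(a^k)=\tr(b^k)$ for every $k\in\mathbb N$. Where you part ways is the endgame. The paper finishes in one line by citing the Aupetit--Mouton trace formula for the spectral radius, $r(a)=\limsup_k|\tr(a^k)|^{1/k}$ (\cite[Theorem 3.5]{a+m}), applied to both elements. You instead recover the spectral data from the power sums by hand: using $\tr(a^k)=\sum_{\lambda\in\sigma^\prime(a)}\lambda^k m(\lambda,a)$, the generating-function argument (the poles $1/\lambda$ cannot cancel since the $\lambda$ are distinct and the multiplicities positive), or equivalently Newton's identities together with the finite-rank bound, shows that the multisets of nonzero spectral values with multiplicities coincide --- which is stronger than the corollary and in fact anticipates Corollary~\ref{multi}, obtained in the paper later via determinants. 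The trade-off is that your route hinges on the identity $\tr(a^k)=\sum_{\lambda\in\sigma(a)}\lambda^k m(\lambda,a)$, i.e.\ compatibility of the abstract multiplicity with the power map, which you flag but do not prove; it does follow from the Aupetit--Mouton theory (for $\lambda\neq0$ one has $m(\lambda,a)=\rank p(\lambda,a)$, Riesz projections transform correctly under the holomorphic calculus, and rank is additive on orthogonal projections), and it is essentially the machinery underlying \cite[Theorem 3.5]{a+m} itself. So your proof is correct, and buys a stronger conclusion, but at the cost of re-deriving (or granting) what the paper simply cites; the paper's citation makes the corollary a two-line argument.
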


\begin{proof}
	If $\rho (a,b)=0$, then, by Lemma~\ref{powers}, $\rho (a^k,b^k)=0$ for each $k\in\mathbb N$. Theorem~\ref{tracethm} and \cite[Theorem 3.5]{a+m} imply
	\[r(a)=\limsup_k|\tr (a^k)|^{\frac{1}{k}}=\limsup_k|\tr (b^k)|^{\frac{1}{k}}=r(b).\]
\end{proof}

In view of the next result Corollary~\ref{specreq} is short lived.

\begin{corollary}\label{equal} Let $A$ be a semisimple Banach algebra and let $a,b\in  \Soc(A)$. Then \[\rho(a,b)=0\Leftrightarrow\rho(b,a)=0.\]
\end{corollary}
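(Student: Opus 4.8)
The aim is to establish $\rho(a,b)=0\Rightarrow\rho(b,a)=0$; interchanging $a$ and $b$ then yields the converse, hence the equivalence. The underlying idea is that, for socle elements, the condition $\rho(a,b)=0$ is equivalent to the entire function $\lambda\mapsto e^{\lambda a}e^{-\lambda b}=\sum_{j\ge 0}\lambda^j C_{a,b}^j1/j!$ being a \emph{polynomial} in $\lambda$, and that the extra information $\tr(a)=\tr(b)$ (Theorem \ref{tracethm}) pins the ``determinant'' of that polynomial to the constant $1$, which in turn forces its inverse $e^{\lambda b}e^{-\lambda a}=\sum_{j\ge 0}\lambda^j C_{b,a}^j1/j!$ to be a polynomial too.

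Assume $\rho(a,b)=0$. Exactly as in the proof of Theorem \ref{tracethm}, $C_{a,b}$ is nilpotent on the finite-dimensional span of the orbit $\{C_{a,b}^n1\}$, so $P(\lambda):=e^{\lambda a}e^{-\lambda b}=\sum_{j=0}^{N}\lambda^j C_{a,b}^j1/j!$ for some $N$. Then $P(0)=1$; each $P(\lambda)$ is invertible with $P(\lambda)^{-1}=e^{\lambda b}e^{-\lambda a}$; and, since $\Soc(A)$ is an ideal, $p(\lambda):=P(\lambda)-1$ lies in $\Soc(A)$ for every $\lambda$. By Theorem \ref{tracethm}, $\tr(a)=\tr(b)$, so the determinant computation from that proof gives $\det P(\lambda)=e^{\lambda\tr(a-b)}\equiv 1$.

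I would now localise in a finite-dimensional corner. The coefficients of $p$ span a finite-dimensional subspace of $\Soc(A)$, so (the socle having local units) there is an idempotent $e\in\Soc(A)$ with $ex=xe=x$ for all $x$ in that span; thus $p(\lambda)\in eAe$ and $P(\lambda)=(1-e)+\bigl(e+p(\lambda)\bigr)$ is a block decomposition. As $A$ is semisimple and $e\in\Soc(A)$, the corner $eAe$ is a finite-dimensional semisimple algebra, hence a product of full matrix algebras; writing $e+p(\lambda)$ accordingly as $\bigl(I_{n_i}+p_i(\lambda)\bigr)_i$ with entries that are polynomials in $\lambda$, and using that the nonzero spectrum and the spectral multiplicities of a socle element are unchanged upon passing to a corner containing it, the identity $\det P(\lambda)\equiv 1$ becomes $\prod_i\det\bigl(I_{n_i}+p_i(\lambda)\bigr)\equiv 1$ with ordinary matrix determinants. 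A product of complex polynomials that is identically $1$ has every factor of degree $0$; evaluating at $\lambda=0$ gives $\det\bigl(I_{n_i}+p_i(\lambda)\bigr)\equiv 1$ for each $i$. By Cramer's rule $\bigl(I_{n_i}+p_i(\lambda)\bigr)^{-1}=\mathrm{adj}\bigl(I_{n_i}+p_i(\lambda)\bigr)$ has polynomial entries, so $P(\lambda)^{-1}=(1-e)+\bigl(e+p(\lambda)\bigr)^{-1}$ is a polynomial in $\lambda$. Comparing with $e^{\lambda b}e^{-\lambda a}=\sum_{j\ge 0}\lambda^j C_{b,a}^j1/j!$ forces $C_{b,a}^j1=0$ for all large $j$, i.e. $\rho(b,a)=0$.

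The step I expect to be the real obstacle is the passage to $eAe$: one needs that the socle has local units and, more carefully, that the Aupetit--Mouton determinant computed in $A$ coincides with the block-wise matrix determinant computed in the finite-dimensional algebra $eAe$ --- a compatibility resting on the invariance of the nonzero spectrum and of the multiplicities of a socle element under restriction to a corner. Everything after that is routine. (If one prefers to avoid the explicit adjugate, note that $\det P(\lambda)\equiv 1$ already forces a polynomial bound on $\|P(\lambda)^{-1}\|$ via the block structure; then $e^{\lambda b}e^{-\lambda a}$ is an entire function of polynomial growth, hence a polynomial by Liouville --- exactly the kind of growth argument used in Theorem \ref{tracethm}.)
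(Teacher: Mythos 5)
Your proposal is correct in substance and reaches the same two pivotal points as the paper --- nilpotency of $C_{a,b}$ on the span of the orbit of $1$ makes $q(\lambda)=e^{\lambda a}e^{-\lambda b}$ a polynomial, and $\det q(\lambda)\equiv 1$ (via Theorem~\ref{tracethm}) then forces $q^{-1}(\lambda)=e^{\lambda b}e^{-\lambda a}=\sum_j\lambda^jC_{b,a}^j1/j!$ to be a polynomial by Cramer's rule in a finite-dimensional matrix setting, whence $C_{b,a}^j1=0$ for large $j$ --- but your finite-dimensional reduction is genuinely different. The paper works inside the finite-dimensional, not necessarily semisimple, subalgebra $B$ generated by $\{1,a,b\}$, passes to $\tilde B=B/\Rad(B)$ viewed inside $M_{N_0}(\mathbb C)$, inverts there by the adjugate, and then lifts back to $B$ using nilpotency of $\Rad(B)$ (the binomial identity); you instead localise to a corner $eAe$ with $e$ a socle idempotent acting as a two-sided unit on the coefficients of $p(\lambda)$, where Wedderburn gives a product of matrix algebras and no radical or lifting step is needed. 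What your route buys is the cleaner semisimple picture; what it costs is two auxiliary facts you assert rather than prove: that the socle has two-sided local units (true: finitely generated one-sided ideals inside the socle of a semiprime algebra are generated by idempotents, and if $xe=x$, $fx=x$, $fe=e$ then $g=e+f-ef$ is an idempotent unit for $x$), and that $eAe$ is finite-dimensional and semisimple for an idempotent $e\in\Soc(A)$ --- these are of comparable depth to the paper's own unproved assertion that $B$ is finite dimensional, so I would not call them a gap, but they need a citation or an argument. Finally, the compatibility of the Aupetit--Mouton determinant with the blockwise matrix determinants, which you single out as the real obstacle, can simply be bypassed: since $p(\lambda)\in eAe$, $q(\lambda)$ commutes with $e$, so $eq(\lambda)^{-1}e$ inverts $e+p(\lambda)$ in $eAe$; hence each block $I_{n_i}+p_i(\lambda)$ is invertible for every $\lambda\in\mathbb C$, its ordinary determinant is a nowhere-vanishing polynomial, hence a nonzero constant, equal to $1$ at $\lambda=0$. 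This removes the dependence on Theorem~\ref{tracethm} from your argument altogether, and the same observation would let the paper avoid justifying the identity $\det(\tilde q(\lambda))=\det(q(\lambda))$.
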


\begin{proof} Suppose $\rho(a,b)=0$. The first part of the proof of Theorem~\ref{tracethm} shows that $q(\lambda)=e^{\lambda a}e^{-\lambda b}$ is a polynomial in $\lambda$ with coefficients belonging to $A$. Thus, to establish the result, it suffices to show that $q^{-1}(\lambda)$ is also a polynomial. Let $B$ be the Banach algebra generated by $\{1,a,b\}$. Then $B$ is finite dimensional, but not necessarily semisimple. Denote $\tilde B=B/\Rad(B)$ and for each $x\in B$, by $\tilde x$ the image of $x$ under the canonical homomorphism $C:B\rightarrow \tilde B$. Since $\tilde B$ is now semisimple it follows that there is a least integer $N_0$ such that $\tilde B$ is a (generally non-surjective) algebra embedding into $M_{N_0}(\mathbb C)$. In this way we may view $\tilde B$ as a closed unital subalgebra of $M_{N_0}(\mathbb C)$. The polynomial $\tilde q(\lambda)$ is therefore a $N_0\times N_0$  matrix whose entries, say $\tilde q_{i,j}(\lambda)$, are polynomials in $\lambda$ with coefficients belonging to $\mathbb C$. Moreover, for each $\lambda\in\mathbb C$, $\tilde q(\lambda)$ is invertible in $M_{N_0}(\mathbb C)$. We now calculate $$\tilde q^{-1}(\lambda)=\frac{1}{\det(\tilde q(\lambda))}b(\lambda)$$ where $b(\lambda)$ is a $N_0\times N_0$ matrix depending analytically on $\lambda$. Since its $(i,j)$ entry is the $(j,i)$ cofactor of $\tilde q(\lambda)$, and $\tilde q(\lambda)$ is a polynomial, it follows that $b(\lambda)$ is a polynomial. But from Theorem~\ref{tracethm} we get that
	$\det(\tilde q(\lambda))=\det(q(\lambda))=1$ for each $\lambda\in\mathbb C$, whence it follows that $\tilde q^{-1}(\lambda)=b(\lambda)$ is a polynomial. Returning to the algebra $B$, we now have the following: There exists a polynomial $p(\lambda)\in B$ such that
	$q^{-1}(\lambda)=p(\lambda)+r(\lambda)$ where, for each $\lambda$, $r(\lambda)\in\Rad(B)$. From this one obtains $p(\lambda)q(\lambda)+r(\lambda)q(\lambda)=1$ where $r(\lambda)q(\lambda)\in\Rad(B)$. Since $\dim B<\infty$ there is $M\in\mathbb N$ (independent of $\lambda$) such that
	$[1-p(\lambda)q(\lambda)]^M=0$ for each $\lambda\in\mathbb C$. The binomial expansion on the left then yields
	\begin{equation}\label{be}
	1=\sum\limits_{j=1}^M(-1)^{j+1}{M \choose j}[p(\lambda)q(\lambda)]^j.
	\end{equation}
	Finally, multiplication of \eqref{be} throughout by $q^{-1}(\lambda)$ on the right shows that $q^{-1}(\lambda)$ is a polynomial.
\end{proof}

\begin{corollary} Let $A$ be a semisimple Banach algebra and $a,b\in  \Soc(A)$.
	Then \[\rho(a,b)=0\Rightarrow \det(1+\lambda a)=\det(1+\lambda b),\ \ \lambda\in\mathbb C.\]
\end{corollary}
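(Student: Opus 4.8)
The plan is to reduce the claimed determinant identity to the equality of \emph{all} the trace powers $\tr(a^k)=\tr(b^k)$, $k\in\mathbb N$, and then to recover the multiset of nonzero eigenvalues of each element from these power sums by Newton's identities.

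First I would note that $\rho(a,b)=0$ forces, via Lemma~\ref{powers}, $\rho(a^k,b^k)=0$ for every $k\in\mathbb N$; since $\Soc(A)$ is an ideal we have $a^k,b^k\in\Soc(A)$, so Theorem~\ref{tracethm} gives $\tr(a^k)=\tr(b^k)$ for all $k\in\mathbb N$. (This is precisely the computation that opens the proof of Corollary~\ref{specreq}.)

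Next, for $c\in\Soc(A)$ let $S_c$ be the finite multiset consisting of each nonzero $\mu\in\sigma(c)$ repeated $m(\mu,c)$ times, so that $|S_c|=\rank(c)<\infty$ and $\det(1+\lambda c)=\prod_{\mu\in S_c}(1+\lambda\mu)$ for all $\lambda$. The Aupetit--Mouton trace formula $\tr(c^k)=\sum_{\mu\in\sigma(c)}m(\mu,c)\mu^k$ (the same fact underlying Corollary~\ref{specreq}; see \cite[Theorem 3.5]{a+m}) says exactly that the $k$-th power sum $p_k(S_c):=\sum_{\mu\in S_c}\mu^k$ equals $\tr(c^k)$. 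Hence $S_a$ and $S_b$ are finite multisets of complex numbers with $p_k(S_a)=p_k(S_b)$ for all $k\in\mathbb N$; padding the shorter one with zeros to a common cardinality $N:=\max\{\rank(a),\rank(b)\}$ — which changes neither the determinant nor the values $p_k$ for $k\ge1$ — and applying Newton's identities, which express $e_1,\dots,e_N$ (and hence the monic polynomial having the padded multiset as its root set) in terms of $p_1,\dots,p_N$, yields $S_a=S_b$. Consequently $\det(1+\lambda a)=\prod_{\mu\in S_a}(1+\lambda\mu)=\prod_{\mu\in S_b}(1+\lambda\mu)=\det(1+\lambda b)$ for every $\lambda\in\mathbb C$, as required; as a by-product one also gets $\rank(a)=\rank(b)$ and $m(\mu,a)=m(\mu,b)$ for each $\mu\neq0$.

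I do not expect a genuinely hard step. The only non-elementary ingredient is the Aupetit--Mouton identity identifying $\tr(c^k)$ with the $k$-th power sum of the eigenvalues of $c$; the rest is bookkeeping — the zero eigenvalue contributes nothing on either side, and equal power sums force equal multisets once one has padded to a common cardinality. One could argue analytically instead, expanding $\log\det(1+\lambda c)=\sum_{k\ge1}\frac{(-1)^{k+1}}{k}\tr(c^k)\lambda^k$ for small $\lambda$ and then using that both sides of the desired identity are polynomials in $\lambda$, but the route through Newton's identities seems cleaner since it sidesteps convergence and analytic-continuation concerns.
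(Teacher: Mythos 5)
Your argument is correct, but it finishes along a genuinely different route from the paper. Both proofs begin identically: Lemma~\ref{powers} together with Theorem~\ref{tracethm} yields $\tr(a^k)=\tr(b^k)$ for all $k\in\mathbb N$. The paper then works with the determinant directly: for $\lambda$ in a neighbourhood of $0$ it writes $\det(1+\lambda a)=e^{\tr(\log(1+\lambda a))}$, uses the power series of the logarithm and linearity of the trace to get $\det(1+\lambda a)=\det(1+\lambda b)$ on that neighbourhood, and extends to all of $\mathbb C$ since both sides are entire. You instead convert the equal trace powers into equal power sums of the multisets of nonzero spectral values counted with multiplicity and invoke Newton's identities to conclude the multisets coincide, after which the determinant identity drops out; in effect you prove Corollary~\ref{multi} first and deduce the present statement from it, whereas the paper derives Corollary~\ref{multi} as a consequence of this corollary. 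Your route buys the multiplicity (and rank) equalities immediately and avoids analytic continuation; the paper's route avoids any appeal to a power-sum formula for the trace. Two small repairs: the identity $\tr(c^k)=\sum_{\mu\in\sigma(c)}m(\mu,c)\,\mu^k$ is not literally \cite[Theorem 3.5]{a+m} (that is the spectral radius formula quoted in Corollary~\ref{specreq}); it requires the spectral mapping behaviour of multiplicities, $m(\mu,c^k)=\sum_{\lambda^k=\mu}m(\lambda,c)$, which follows from $m(\lambda,c)=\rank p(\lambda,c)$ and additivity of rank over orthogonal Riesz projections, so it should be stated and justified rather than cited. Also $|S_c|=\sum_{\mu\neq 0}m(\mu,c)$ is in general only $\leq\rank(c)$ (a nonzero nilpotent has empty $S_c$); this is harmless, since your padding argument needs only finiteness of the multisets.
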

\begin{proof} If $1\in\sigma(-\lambda a)=\sigma(-\lambda b)$ the result follows trivially. So assume the contrary. Since elements belonging to the socle have discrete spectrum $\log(1+\lambda a)$ and $\log(1+\lambda b)$ exist in $A$. Furthermore, if $\lambda\in U:=\{\lambda: \|\lambda a\|<1 \mbox{ and }\|\lambda b\|<1\}$ then \[\log(1+\lambda a)=\sum\limits_{j=1}^\infty\frac{1}{j}(-\lambda a)^{j+1}
	\ \mbox{ and }\ \log(1+\lambda b)=\sum\limits_{j=1}^\infty\frac{1}{j}(-\lambda b)^{j+1}.\] Combining this with Lemma~\ref{powers} and Theorem~\ref{tracethm}, the linearity of the trace implies
	that $\tr(\log(1+\lambda a))=\tr(\log(1+\lambda b))$ and hence
	$$\det(1+\lambda a)=e^{\tr(\log(1+\lambda a))}=e^{\tr(\log(1+\lambda b))}=\det(1+\lambda b)$$ for all $\lambda\in U$. Since the entire functions $\lambda\mapsto\det(1+\lambda a)$ and $\lambda\mapsto\det(1+\lambda b)$ agree on the open set $U$ they also agree on $\mathbb C$.
\end{proof}

\begin{corollary}\label{multi} Let $A$ be a semisimple Banach algebra and let $a,b\in  \Soc(A)$. If $\rho(a,b)=0$ then $m(\lambda,a)=m(\lambda,b)$ for each $\lambda\in\sigma^\prime(a)=\sigma^\prime(b)$.
\end{corollary}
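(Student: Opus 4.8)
The plan is to read the multiplicities off the determinant, leaning entirely on the corollary immediately preceding this one. First I would note that $\rho(a,b)=0$ forces $d(a,b)=0$ by Corollary~\ref{equal}, hence $\sigma(a)=\sigma(b)$ and in particular $\sigma^\prime(a)=\sigma^\prime(b)$; write $S$ for this common set of nonzero spectral points. By the preceding corollary, $\det(1+\lambda a)=\det(1+\lambda b)$ for every $\lambda\in\mathbb C$.

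Next I would convert each side into a polynomial in $\lambda$ whose zeros encode the multiplicity data. Since every element of $\Soc(A)$ has finite spectrum, applying \eqref{determinant} to $\lambda a\in\Soc(A)$, together with $\sigma(\lambda a)=\lambda\sigma(a)$ and the scaling invariance $m(\lambda\mu,\lambda a)=m(\mu,a)$ for $\lambda\neq 0$ (which follows readily from the definition of multiplicity recalled above, replacing the disk $V_\mu$ by $\lambda V_{\mu}$ and using that the dilation $x\mapsto\lambda x$ preserves $E(a)$), yields
\[\det(1+\lambda a)=\prod_{\mu\in S}(1+\lambda\mu)^{m(\mu,a)},\]
a polynomial in $\lambda$ (the factor contributed by $0\in\sigma(a)$, if present, being identically $1$). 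The analogous identity holds with $b$ in place of $a$.

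Finally I would invoke uniqueness of factorization of polynomials over $\mathbb C$: the polynomials $\prod_{\mu\in S}(1+\lambda\mu)^{m(\mu,a)}$ and $\prod_{\mu\in S}(1+\lambda\mu)^{m(\mu,b)}$ are equal, and since $\mu\mapsto-1/\mu$ is injective on $\mathbb C\setminus\{0\}$, the point $-1/\mu$ is a zero of the first polynomial of exact order $m(\mu,a)$ and of the second of exact order $m(\mu,b)$. Equating orders of zeros gives $m(\mu,a)=m(\mu,b)$ for every $\mu\in S=\sigma^\prime(a)=\sigma^\prime(b)$, which is precisely the assertion.

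I do not foresee a real obstacle here: the only points that deserve a line of justification are that $\det(1+\lambda a)$ is genuinely a polynomial (finiteness of the socle spectrum) and the bookkeeping for how $m(\cdot,\cdot)$ transforms under the dilation $a\mapsto\lambda a$; granting those, the statement is the preceding corollary plus elementary algebra. One could alternatively avoid the determinant and argue directly from $\rho(a^k,b^k)=0$ (Lemma~\ref{powers}), Theorem~\ref{tracethm}, and Newton's identities relating the power sums $\tr(a^k)=\sum_{\mu}\mu^{k}m(\mu,a)$ to the elementary symmetric functions of the (finite) nonzero spectrum, but routing through the determinant is shorter.
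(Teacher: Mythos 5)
Your argument is correct and is essentially the paper's own proof: both pass through the preceding corollary to get $\det(1+\lambda a)=\det(1+\lambda b)$ as polynomials in $\lambda$ and then equate the orders of the roots $-1/\mu$ to identify the multiplicities. Your explicit justification of the scaling behaviour $m(\lambda\mu,\lambda a)=m(\mu,a)$ is a detail the paper leaves implicit, but it does not change the route.
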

\begin{proof} With the hypothesis we can write $\sigma^\prime(a)=\sigma^\prime(b)=\{\lambda_1,\dots,\lambda_k\}$. Thus
	\[\det(1+\lambda a)=\prod\limits_{i=1}^k(1+\lambda\lambda_i)^{m(\lambda_i,a)}=
	\prod\limits_{i=1}^k(1+\lambda\lambda_i)^{m(\lambda_i,b)}=\det(1+\lambda b)\] holds for all $\lambda\in\mathbb C$. But if ${m(\lambda_j,a)}\not={m(\lambda_j,b)}$ for some $j$, then the order of the root $-\lambda_j^{-1}$ of $\det(1+\lambda a)$ is not uniquely determined which is absurd.
\end{proof}

\begin{corollary}\label{traceappl} Let $A$ be a semisimple Banach algebra. If $a,b\in  \Soc(A)$ then $a=b$ if and only $\rho(xa,xb)=0$ for all $x$ in an arbitrary small open subset $N$ of $ \Soc(A)$.
\end{corollary}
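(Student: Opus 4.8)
The forward implication is immediate: if $a=b$ then $xa=xb$ for every $x\in A$, whence $\rho(xa,xb)=\rho(xa,xa)=r(xa-xb)=0$, and this holds in particular for all $x$ in any (nonempty open) subset $N$ of $\Soc(A)$. For the converse, the plan is to route the hypothesis through Theorem~\ref{tracethm}. Since $\Soc(A)$ is a two-sided ideal, $xa,xb\in\Soc(A)$ for every $x\in A$, so for each $x\in N$ the assumption $\rho(xa,xb)=0$ together with Theorem~\ref{tracethm} yields $\tr(xa)=\tr(xb)$; by linearity of the trace this is $\tr\bigl(x(a-b)\bigr)=0$ for all $x\in N$. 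Set $c=a-b\in\Soc(A)$.

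Next I would observe that the map $\varphi\colon\Soc(A)\to\mathbb C$, $\varphi(x)=\tr(xc)$, is a well-defined linear functional (again because $\Soc(A)$ is an ideal and the trace is linear on the socle), and it vanishes on the nonempty open set $N$. Fixing $x_0\in N$ and $\varepsilon>0$ with $x_0+B\subseteq N$, where $B=\{y\in\Soc(A):\|y\|<\varepsilon\}$, the relations $\varphi(x_0+y)=0=\varphi(x_0)$ force $\varphi(y)=0$ for all $y\in B$; since $B$ is absorbing in $\Soc(A)$ we get $\varphi\equiv 0$, that is, $\tr(xc)=0$ for every $x\in\Soc(A)$.

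It then remains to deduce $c=0$ from $\tr(xc)=0$ for all $x\in\Soc(A)$ — the nondegeneracy of the trace form on the socle — and this is the step I expect to carry the real content. One may simply quote it from~\cite{a+m}. Alternatively I would argue directly: if $c\neq 0$ then $\rank(c)\geq 1$, so there is $x_0\in A$ with $\sigma'(x_0c)\neq\varnothing$; choose a nonzero $\lambda\in\sigma(x_0c)$ and let $p=p(\lambda,x_0c)$ be the associated Riesz projection. Because $\rank(p)=m(\lambda,x_0c)\leq\rank(x_0c)<\infty$, $p$ is a finite-rank idempotent, hence $p\in\Soc(A)$ and therefore $px_0\in\Soc(A)$. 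Since $p$ commutes with $x_0c$, the element $(px_0)c=p(x_0c)$ has nonzero spectrum $\{\lambda\}$ with multiplicity $m(\lambda,x_0c)$, so $\tr\bigl((px_0)c\bigr)=\lambda\,m(\lambda,x_0c)\neq 0$, contradicting $\varphi\equiv 0$. Hence $c=0$, i.e.\ $a=b$.

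The only delicate ingredients are the standard properties of the abstract trace and of Riesz projections of socle elements ($m(\lambda,d)=\rank p(\lambda,d)$, compatibility of multiplicities with the holomorphic functional calculus, and $\tr(uv)=\tr(vu)$ on $\Soc(A)$), all of which are available from~\cite{a+m}; everything else is a routine combination of Theorem~\ref{tracethm}, linearity of the trace, and the elementary fact that a linear functional vanishing on a nonempty open subset of a vector space vanishes identically.
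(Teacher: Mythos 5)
Your argument is correct and follows essentially the same route as the paper: use Theorem~\ref{tracethm} together with linearity of the trace to upgrade the hypothesis on the open set $N$ to $\tr(xa)=\tr(xb)$ for all $x\in\Soc(A)$ (the paper does this by perturbing a fixed $y\in N$ along $y+\lambda x$ for small $\lambda$, which is the same openness/linearity idea as your vanishing-on-an-open-set argument), and then conclude $a=b$ from the nondegeneracy of the trace form on the socle, which the paper simply cites as \cite[Corollary 3.6]{a+m}. Your optional direct verification of that nondegeneracy via a Riesz projection is a reasonable self-contained substitute for the citation, but it does not change the overall approach.
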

\begin{proof} For the reverse implication: Fix $y\in N$ and let $x\in  \Soc(A)$ be arbitrary. There exists $\epsilon>0$ such that $\rho((y+\lambda x)a,(y+\lambda x)b)=0$ for $|\lambda|<\epsilon$.  The linearity of the trace, together with Theorem~\ref{tracethm}, gives $\tr(xa)=\tr(xb)$ and the result is immediate from \cite[Corollary 3.6]{a+m}.
\end{proof}

Since the definition of multiplicity, $m(\lambda,a)$,  involves products of $a$ with other elements $x\in A$ it is interesting that $\rho(a,b)=0$ can establish the conclusion in Corollary~\ref{multi}; after all, the expression $\rho(a,b)$ concerns only the elements $a$ and $b$. On the other hand, quasinilpotent equivalence of $a$ and $b$ cannot, in the general sense, establish any connection between the respective ranks of $a$ and $b$; if $a,b\in\QN(A)$ then irrespective of rank we have $\rho(a,b)=0$. Theorem~\ref{char1} in the following section clarifies these issues.

\section{Riesz projections and quasinilpotent equivalence}\label{sec4}
The main result of \cite{razpet} says that any two quasinilpotent equivalent elements, which are simultaneously roots of an entire function, $f$, possessing only simple zeros, must necessarily be equal. Inspection of the proof reveals that the (rather strong) assumptions concerning the function $f$, somewhat obscures a useful consequence of quasinilpotent equivalence which involves the Riesz projections associated with isolated spectral values. We show here, for the case of elements with finite spectra, how this can be used to derive spectral-algebraic characterizations of quasinilpotent equivalence.

A modification of the proof of \cite[Theorem 3.1]{razpet} yields Lemma~\ref{Riesz}
which is also used later to obtain the main result in Section~\ref{sec5}.

\begin{lemma}\label{Riesz} Let $A$ be a Banach algebra. Suppose $d(a,b)=0$ and that $\lambda_1\not=0$ is an isolated point of $\sigma(a)=\sigma(b)$. If the Riesz projections $p_1=p(\lambda_1,a)$ and $q_1=p(\lambda_1,b)$ satisfy $ap_1=\lambda_1p_1$ and $bq_1=\lambda_1q_1$, then $p_1=q_1$.
\end{lemma}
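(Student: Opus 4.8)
The plan is to exploit the hypothesis $ap_1 = \lambda_1 p_1$, $bq_1 = \lambda_1 q_1$ together with $d(a,b)=0$ to show that the element $x := p_1 q_1$ and the two projections coincide in a way that forces $p_1 = q_1$. The crucial computational tool is the identity \eqref{xy}, which lets one split $C_{a,b}^n$ applied to a product. First I would note that $C_{a,b}(p_1) = ap_1 - p_1 b$; this is not obviously nice, so instead I would work with the product $p_1 q_1$ and apply \eqref{xy} with $x = p_1$, $y = q_1$ and an intermediate element $c = \lambda_1$ (the scalar, i.e.\ $\lambda_1 \cdot 1$). With that choice, $C_{a,\lambda_1} p_1 = a p_1 - \lambda_1 p_1 = 0$ by hypothesis, and likewise $C_{\lambda_1, b} q_1 = \lambda_1 q_1 - q_1 b = 0$. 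Hence in the sum $C_{a,b}^n(p_1 q_1) = \sum_{k=0}^n \binom{n}{k}(C_{a,\lambda_1}^{n-k} p_1)(C_{\lambda_1,b}^k q_1)$ every term with $n-k \geq 1$ or $k \geq 1$ vanishes, leaving only the $k=0$, $n-k=0$ term, which is empty unless $n=0$. So $C_{a,b}^n(p_1 q_1) = 0$ for all $n \geq 1$, and $C_{a,b}^0(p_1 q_1) = p_1 q_1$.

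Next I would extract information from $d(a,b)=0$, which gives $\rho(a,b)=0$, i.e.\ $\|C_{a,b}^n 1\|^{1/n} \to 0$. Following the idea behind \cite[Theorem 3.1]{razpet}, I would consider the $A$-valued entire function $\lambda \mapsto e^{\lambda a} p_1 q_1 e^{-\lambda b}$. Using \eqref{exp}-type expansions, or more directly by writing $e^{\lambda a} z e^{-\lambda b} = \sum_j \frac{\lambda^j}{j!} C_{a,b}^j z$ (the analogue of \eqref{exp} for a general element $z$ in place of $1$, which follows from the same computation since $L_a, R_b$ commute), and plugging in $z = p_1 q_1$, the vanishing established in the previous paragraph collapses the series to a single term: $e^{\lambda a}(p_1 q_1)e^{-\lambda b} = p_1 q_1$ for all $\lambda \in \mathbb{C}$. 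Differentiating at $\lambda=0$ gives $a p_1 q_1 = p_1 q_1 b$, i.e.\ $\lambda_1 p_1 q_1 = \lambda_1 p_1 q_1$ — tautological — so differentiation alone is too weak; the real content is the \emph{global} identity $e^{\lambda a} p_1 q_1 e^{-\lambda b} = p_1 q_1$. To use it, I would multiply on the right by $e^{\lambda b}$ to get $e^{\lambda a} p_1 q_1 = p_1 q_1 e^{\lambda b}$ for all $\lambda$, then apply the holomorphic functional calculus / integrate against $(\mu - e^{\lambda a})^{-1}$ around a suitable contour, or — cleaner — replace $e^{\lambda a}$ and $e^{\lambda b}$ throughout by $p(\lambda_1, a)$ and $p(\lambda_1,b)$ via contour integration. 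Concretely: from $e^{\lambda a} p_1 q_1 = p_1 q_1 e^{\lambda b}$ one deduces $f(a)\, p_1 q_1 = p_1 q_1\, f(b)$ for every entire $f$, and by a standard approximation/contour argument this passes to $f$ holomorphic on neighbourhoods of $\sigma(a)=\sigma(b)$; taking $f$ to be $1$ near $\lambda_1$ and $0$ near the rest of the spectrum yields $p_1 (p_1 q_1) = (p_1 q_1) q_1$, that is $p_1 q_1 = p_1 q_1$ again on one side but $p_1 \cdot p_1 q_1 = p_1 q_1$ trivially, while the other side gives $p_1 q_1 q_1 = p_1 q_1$; combined with $p(\alpha,a)\,p_1 q_1 = p_1 q_1\, p(\alpha,b)$ for $\alpha \neq \lambda_1$ (both sides $0$), one gets that $p_1 q_1$ "intertwines" the spectral decompositions, forcing $p_1 q_1 = q_1 p_1$ and then $p_1 = q_1$.

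The main obstacle I anticipate is the last step: passing from the symmetric pair of relations $e^{\lambda a} p_1 q_1 = p_1 q_1 e^{\lambda b}$ (and its mirror image $e^{-\lambda b} q_1 p_1 = q_1 p_1 e^{-\lambda a}$, obtained by running the same argument with the roles of $a,b$ swapped, using $\rho(b,a)=0$ from $d(a,b)=0$) to the conclusion $p_1 = q_1$. The symmetric argument should give $q_1 p_1$ commuting appropriately too, and then one shows $p_1 q_1 = q_1 p_1 = p_1 = q_1$ by checking that $p_1 q_1$ is an idempotent with the same range and kernel data as $p_1$ and as $q_1$; the cleanest route is probably to observe $p_1 q_1 = p_1(p_1 q_1) = (p_1 q_1) q_1$ and $q_1 p_1 = q_1(q_1 p_1) = (q_1 p_1) p_1$ and combine with the intertwining to get $p_1 = p_1 q_1 p_1$ etc., but I expect some care is needed to rule out the possibility of a genuinely non-symmetric "generalized" relationship between $p_1$ and $q_1$ — this is exactly where the finiteness or isolatedness of $\lambda_1$, and the hypothesis that $p_1, q_1$ are the \emph{eigen}-projections (so $ap_1 = \lambda_1 p_1$ rather than merely $p_1 a p_1 = \lambda_1 p_1$), must be used decisively, and it mirrors the part of the proof of \cite[Theorem 3.1]{razpet} that handles simple zeros.
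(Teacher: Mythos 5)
Your preparatory computation is correct but it never actually uses the hypothesis $d(a,b)=0$, and this is a fatal gap. The vanishing $C_{a,b}^n(p_1q_1)=0$ for $n\geq 1$, the collapsed identity $e^{\lambda a}p_1q_1e^{-\lambda b}=p_1q_1$, and the intertwining relations $f(a)\,p_1q_1=p_1q_1\,f(b)$ are all consequences of the eigen-relations alone: since $ap_1=\lambda_1p_1$ and $q_1b=bq_1=\lambda_1q_1$ (the Riesz projection commutes with $b$), one has directly $e^{\lambda a}p_1=e^{\lambda\lambda_1}p_1$ and $q_1e^{-\lambda b}=e^{-\lambda\lambda_1}q_1$, so every identity you derive is automatic and carries no information coming from quasinilpotent equivalence. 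Consequently these relations cannot force $p_1=q_1$: take $A=M_2(\mathbb C)$, $\lambda_1=1$, $a=p$ and $b=q$ two distinct idempotents; then $\sigma(a)=\sigma(b)=\{0,1\}$, $p_1=p$, $q_1=q$, $ap_1=p_1$, $bq_1=q_1$, and all of your intermediate conclusions hold, yet $p_1\neq q_1$ (of course here $d(a,b)\neq 0$, which is exactly the hypothesis your chain of deductions never invokes). The "obstacle" you flag at the end is therefore not a matter of care in the last step; the approach as outlined cannot close, because somewhere one must exploit the decay of $\|C_{b,a}^n1\|^{1/n}$ (or $\|C_{a,b}^n1\|^{1/n}$), i.e.\ the operator $C$ applied to $1$, not to $p_1q_1$ where it dies for trivial reasons.

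For comparison, the paper's proof makes $d(a,b)=0$ enter through convergence of the resolvent-type series
\[F(\lambda)=\sum_{r=0}^\infty\frac{C_{b,a}^r1}{(\lambda-\lambda_1)^{r+1}},\]
which converges for all $\lambda\neq\lambda_1$ precisely because $\rho(b,a)=0$. The identity $(\lambda-b)C_{b,a}^r1=(C_{b,a}^r1)(\lambda-a)-C_{b,a}^{r+1}1$ together with $(\lambda-a)p_1=(\lambda-\lambda_1)p_1$ makes the series $(\lambda-b)F(\lambda)p_1$ telescope to $p_1$, so $F(\lambda)p_1=(\lambda-b)^{-1}p_1$ off the spectrum; integrating around a small circle about $\lambda_1$, the residue of $F(\lambda)p_1$ picks out only the $r=0$ term and yields $p_1=q_1p_1$. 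The symmetric series $G(\lambda)$, used with $bq_1=\lambda_1q_1$, gives $q_1=q_1p_1$, whence $p_1=q_1$. If you want to salvage your plan, you would need to replace your terminating series in $p_1q_1$ by such a Laurent-type series in $C_{b,a}^r1$ (or $C_{a,b}^r1$), which is essentially Razpet's and the paper's argument.
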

\begin{proof} Define \[F(\lambda)=\sum_{r=0}^\infty\frac{C_{b,a}^r1}{(\lambda-\lambda_1)^{r+1}}\] which converges for $\lambda\not=\lambda_1$ since $d(a,b)=0$. Using the identity
	\begin{equation}\label{iden}
	(\lambda -b)C_{b,a}^r1=(C_{b,a}^r1)(\lambda -a)-C_{b,a}^{r+1}1,
	\end{equation}
	together with $ap_1=\lambda_1p_1$,  observe that the series given by
	\[(\lambda -b)F(\lambda)p_1=\sum_{r=0}^\infty\frac{C_{b,a}^r1(\lambda -a)-C_{b,a}^{r+1}1}{{(\lambda -\lambda_1)^{r+1}}}{p_1}\]
	is telescopic whence we obtain
	\[(\lambda -b)F(\lambda)p_1=p_1.\]
	If $\Gamma_1$ is a small circle disjoint from $\sigma(a)$, and surrounding only $\lambda_1\in\sigma(a)$ then
	\begin{equation}\label{int}
	p_1=\frac{1}{2{\pi}i}\int_{\Gamma_1}F(\lambda )p_1\,d\lambda =\frac{1}{2{\pi}i}\int_{\Gamma_1}(\lambda -b)^{-1}p_1\,d\lambda =q_1p_1.
	\end{equation}
	Now define  \[G(\lambda)=\sum_{r=0}^\infty\frac{C_{b,a}^r1}{(\lambda_1-\lambda)^{r+1}}\] which also converges for $\lambda\not=\lambda_1$. A rearrangement of the identity \eqref{iden} then gives
	\[q_1G(\lambda)(a-\lambda)=\sum_{r=0}^\infty q_1\frac{(b-\lambda)C_{b,a}^r1-C_{b,a}^{r+1}1}{{(\lambda_1 -\lambda)^{r+1}}}\]
	so that, similar to the case for $F$,
	\[q_1G(\lambda)(a-\lambda)=q_1.\]
	Multiplication by $(a-\lambda)^{-1}$ followed by integration along $\Gamma_1,$ as in \eqref{int}, yields
	\[-q_1=-q_1p_1\] and the conclusion follows.
\end{proof}

In the case of finite spectra, quasinilpotent equivalence implies equality of the Riesz projections $p(\lambda_j,a)$ and $p(\lambda_j,b)$ even without the additional conditions required in Lemma~\ref{Riesz}; the simple idea here is to show that quasinilpotent equivalence of $a$ and $b$ implies quasinilpotent equivalence of two related but possibly different elements, say $\tilde a$ and $\tilde b$, which have the same spectra, as well as the same sets of  Riesz projections corresponding to $a$ and $b$ respectively, \emph{but}, for which the requirements in Lemma~\ref{Riesz} do hold for all the Riesz projections corresponding to nonzero spectrum values (equality of the Riesz projections $p(0,a)$ and $p(0,b)$, if $0$ belongs to the spectrum, follows from the fact that the Riesz projections sum to $\mathbf 1$):

\begin{theorem}\label{char1} Let $A$ be a Banach algebra. If $a,b\in A$ and $\sigma(a)$ is finite, then $d(a,b)=0$ if and only if the following conditions hold:
	\begin{itemize}
		\item[(i)]{ $\sigma(a)=\sigma(b)$ }
		\item[(ii)]{ $p(\lambda,a)=p(\lambda,b)$ for each $\lambda\in\sigma(a)$.}
	\end{itemize}
\end{theorem}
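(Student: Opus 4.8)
The plan is to reduce the general situation to the special one already covered by Lemma~\ref{Riesz}, by peeling off, via the holomorphic functional calculus, the quasinilpotent part of $a$ (and of $b$). Write $\sigma(a)=\{\lambda_0,\lambda_1,\dots,\lambda_n\}$, choose small disjoint open disks $D_j$ about the $\lambda_j$, and let $f$ be the locally constant function equal to $\lambda_j$ on $D_j$; put $g(\mu)=\mu-f(\mu)$. Set $\tilde a=f(a)$ and $q=g(a)=a-\tilde a$. I would first record the routine consequences: $\tilde a$ and $q$ lie in the closed subalgebra generated by $1$ and $a$, so they commute with $a$ and with one another; by the spectral mapping theorem $\sigma(q)=g(\sigma(a))=\{0\}$, hence $q\in\QN(A)$; and $\tilde a=\sum_j\lambda_j\,p(\lambda_j,a)$, the $p(\lambda_j,a)$ being orthogonal idempotents summing to $1$, from which one reads off directly that $\sigma(\tilde a)=\sigma(a)$, that $p(\lambda_j,\tilde a)=p(\lambda_j,a)$ (integrate $(\mu-\tilde a)^{-1}=\sum_i(\mu-\lambda_i)^{-1}p(\lambda_i,a)$ around $\lambda_j$), and that $\tilde a\,p(\lambda_j,a)=\lambda_j\,p(\lambda_j,a)$. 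When $\sigma(a)=\sigma(b)$ one may use the same $f$ to define $\tilde b=f(b)$, $r=g(b)$, enjoying the analogous properties.

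For the forward implication, assume $d(a,b)=0$. Then $\sigma(a)=\sigma(b)$ by \cite[Theorem 2.1]{raub}, which is (i) and also licenses the construction of $\tilde b$. Since $-q$ and $-r$ are quasinilpotent and commute with $a$ and $b$ respectively, Lemma~\ref{qc} gives $\rho(\tilde a,\tilde b)=\rho(a+(-q),b+(-r))=\rho(a,b)=0$ and, the same way, $\rho(\tilde b,\tilde a)=\rho(b,a)=0$; hence $d(\tilde a,\tilde b)=0$. Now fix a nonzero $\lambda_j\in\sigma(\tilde a)=\sigma(\tilde b)$: it is isolated because the spectrum is finite, and by construction $\tilde a\,p(\lambda_j,\tilde a)=\lambda_j\,p(\lambda_j,\tilde a)$ and $\tilde b\,p(\lambda_j,\tilde b)=\lambda_j\,p(\lambda_j,\tilde b)$, so Lemma~\ref{Riesz} applies and yields $p(\lambda_j,a)=p(\lambda_j,\tilde a)=p(\lambda_j,\tilde b)=p(\lambda_j,b)$. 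If $0\in\sigma(a)$, the final equality $p(0,a)=p(0,b)$ is obtained by subtracting the equalities just proved from $\sum_j p(\lambda_j,a)=1=\sum_j p(\lambda_j,b)$. This establishes (ii).

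For the converse, assume (i) and (ii). Using the same $f$ for $a$ and $b$, condition (ii) forces $\tilde a=\sum_j\lambda_j\,p(\lambda_j,a)=\sum_j\lambda_j\,p(\lambda_j,b)=\tilde b$, so $\rho(\tilde a,\tilde b)=\rho(\tilde b,\tilde a)=r(\tilde a-\tilde a)=0$ and $d(\tilde a,\tilde b)=0$ trivially. Since $q\in\QN(A)$ commutes with $\tilde a$ and $r\in\QN(A)$ commutes with $\tilde b=\tilde a$, Lemma~\ref{qc} gives $\rho(a,b)=\rho(\tilde a+q,\tilde a+r)=\rho(\tilde a,\tilde a)=0$, and likewise $\rho(b,a)=0$, so $d(a,b)=0$.

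I expect the main obstacle to be not any single estimate but the bookkeeping of the reduction: one must verify carefully that passing from $a$ to $\tilde a$ leaves the Riesz projections unchanged while forcing the eigen-relations $\tilde a\,p(\lambda_j,a)=\lambda_j\,p(\lambda_j,a)$ that Lemma~\ref{Riesz} demands, that $q$ and $r$ are genuinely quasinilpotent and commute with the correct elements so that Lemma~\ref{qc} is applicable, and — because $\rho$ is not known to be symmetric — that $\rho(a,b)$ and $\rho(b,a)$ are transferred separately. A secondary point needing care is the spectral value $0$: Lemma~\ref{Riesz} is unavailable there, so the equality $p(0,a)=p(0,b)$ must instead be extracted from the fact that the Riesz projections sum to the identity.
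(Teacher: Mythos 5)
Your proof is correct and follows essentially the same route as the paper: both reduce to the element $\tilde a=\sum_j\lambda_j\,p(\lambda_j,a)$ (the paper writes it directly, you obtain it as $f(a)$ for a locally constant $f$), transfer $d(a,b)=0$ to the pair $(\tilde a,\tilde b)$ by discarding the commuting quasinilpotent parts, invoke Lemma~\ref{Riesz} at the nonzero spectral points, and recover $p(0,a)=p(0,b)$ from the projections summing to $1$; the converse is the same decomposition run backwards via Lemma~\ref{qc} (the paper cites Razpet's Corollary 2.1 and Proposition~\ref{quasi} for this step, which amounts to the same thing).
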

\begin{proof}$\Rightarrow$:  Quasinilpotent equivalence implies (i). If $a$ and $b$ are quasinilpotent, then (ii) is trivial; so assume the contrary. Writing
	$\sigma(a)=\sigma(b)=\{\lambda_1,\dots,\lambda_n\}$ the Holomorphic Calculus implies
	\[a=\sum_{i=1}^nap_i\ \ \ \mbox{ and }\ \ \ \ b=\sum_{i=1}^nbq_i\] where $p_i=p(\lambda_i,a)$ and $q_i=p(\lambda_i,b)$. If we notice further, from the Spectral Mapping Theorem, that $\{(a-\lambda_i)p_i\}_{i=1}^n$ is a set of commuting quasinilpotents and similarly that $\{(b-\lambda_i)p_i\}_{i=1}^n$ is a set of commuting quasinilpotents, then it follows, using the fact that the Riesz projections sum to $1$,  that
	\begin{equation}\label{qn}
	\sigma(a-(\lambda_1p_1+\cdots+\lambda_np_n))=\{0\}=\sigma(b-(\lambda_1q_1+\cdots+\lambda_nq_n)).
	\end{equation}
	Since $d$ is a semimetric \eqref{qn} implies
	\begin{equation*}
	\begin{array}{ll}{} & d(\lambda_1p_1+\cdots+\lambda_np_n,\lambda_1q_1+\cdots+\lambda_nq_n)\\
	\leq & d(\lambda_1p_1+\cdots+\lambda_np_n,a)+d(a,b)+d(b,\lambda_1q_1+\cdots+\lambda_nq_n)\\
	= & 0.
	\end{array}
	\end{equation*}
	This implies $p_i=q_i$ for each $i$ by Lemma~\ref{Riesz}.
	
	$\Leftarrow$: If $a$, and hence $b$, are quasinilpotent the result follows trivially. Otherwise write $\sigma(b)=\sigma(a)=\{\lambda_1,\dots,\lambda_n\}$. As above, using (ii), we can write
	\[a=\sum_{i=1}^nap_i\ \ \ \mbox{ and }\ \ \ \ b=\sum_{i=1}^nbp_i\] where $p_i=p(\lambda_i,a)=p(\lambda_i,b)$. Again applying the Spectral Mapping Theorem to $\{(a-\lambda_i)p_i\}_{i=1}^n$ and  $\{(b-\lambda_i)p_i\}_{i=1}^n$, but now using \cite[Corollary 2.1]{razpet} and Proposition~\ref{quasi}, we see
	\begin{align*}
	d(a,b)&=d(ap_1+\cdots+ap_n,bp_1+\cdots+bp_n)\\
	&=d((a-\lambda_1)p_1+\cdots+(a-\lambda_n)p_n,(b-\lambda_1)p_1+\cdots+(b-\lambda_n)p_n)\\&=0.
	\end{align*}
\end{proof}

Theorem~\ref{char1} sheds light on the remark following Corollary~\ref{traceappl}: If $a\in\Soc(A)$ and $\lambda\in\sigma^\prime(a)$, then \cite[Theorem 2.6]{a+m} shows that $m(\lambda,a)=\rank p(\lambda,a)$.

\begin{theorem}\label{char2} Let $A$ be a Banach algebra. If $a,b\in A$ and $\sigma(a)$ is finite, then $d(a,b)=0$ if and only if
	$a-b$ can be expressed as the difference of two quasinilpotent elements, $r_a$ and $r_b$, commuting with $a$ and $b$ respectively.
\end{theorem}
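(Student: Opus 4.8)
The plan is to prove the two implications separately, using Theorem~\ref{char1} for the forward direction and Lemma~\ref{qc} for the reverse; the finiteness of $\sigma(a)$ will only be needed for the former.

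For the forward direction, assume $d(a,b)=0$. By Theorem~\ref{char1} we may write $\sigma(a)=\sigma(b)=\{\lambda_1,\dots,\lambda_n\}$ with $p_i:=p(\lambda_i,a)=p(\lambda_i,b)$ for each $i$. Set $\mu=\sum_{i=1}^n\lambda_i p_i$ and define $r_a=a-\mu$ and $r_b=b-\mu$; then trivially $r_a-r_b=a-b$. Exactly as in the derivation of \eqref{qn} in the proof of Theorem~\ref{char1} — using $\sum_i p_i=1$ to write $a-\mu=\sum_i(a-\lambda_i)p_i$ as a sum of commuting quasinilpotents, and likewise for $b-\mu$ — one gets $\sigma(r_a)=\{0\}=\sigma(r_b)$, so $r_a,r_b\in\QN(A)$. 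Finally, since each $p_i$ is a Riesz projection of $a$ it lies in the bicommutant of $a$, hence $\mu$ commutes with $a$ and therefore so does $r_a=a-\mu$; the same argument applied with $p_i=p(\lambda_i,b)$ shows that $r_b=b-\mu$ commutes with $b$. This yields the required decomposition. (The case where $a$, and hence $b$, is quasinilpotent is automatically included: then $n=1$, $\lambda_1=0$, $\mu=0$, and $r_a=a$, $r_b=b$.)

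For the reverse direction, suppose $a-b=r_a-r_b$ with $r_a,r_b\in\QN(A)$, where $r_a$ commutes with $a$ and $r_b$ commutes with $b$. Put $c=a-r_a=b-r_b$. Since $r_a$ commutes with $a$, it commutes with $c=a-r_a$, and similarly $r_b$ commutes with $c=b-r_b$. Applying Lemma~\ref{qc} once with $(a,b,q,r)$ there taken to be $(c,c,r_a,r_b)$ and once with $(c,c,r_b,r_a)$, and noting that $c$ commutes with itself so that $\rho(c,c)=r(c-c)=0$, we obtain
\[\rho(a,b)=\rho(c+r_a,c+r_b)=\rho(c,c)=0\qquad\text{and}\qquad \rho(b,a)=\rho(c+r_b,c+r_a)=\rho(c,c)=0,\]
whence $d(a,b)=0$.

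I do not anticipate a serious obstacle: once Theorem~\ref{char1} is in hand, the content of the forward direction is simply recognizing that $a-\mu$ is precisely the quasinilpotent ``part'' already isolated in \eqref{qn} and checking that it commutes with $a$ via the bicommutant property of Riesz projections; the reverse direction is essentially immediate from Lemma~\ref{qc} once the common summand $c=a-r_a=b-r_b$ is introduced. The only points demanding any care are the verification that $r_a,r_b$ commute with the correct elements and are genuinely quasinilpotent, and observing that the reverse implication in fact holds with no hypothesis on $\sigma(a)$ whatsoever.
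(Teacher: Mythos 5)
Your proposal is correct and follows essentially the same route as the paper: the forward direction uses Theorem~\ref{char1} to get common Riesz projections and takes $r_a=\sum_i(a-\lambda_i)p_i$, $r_b=\sum_i(b-\lambda_i)p_i$ (your $a-\mu$, $b-\mu$ is the same decomposition as \eqref{diff}), while the reverse direction is the paper's application of Lemma~\ref{qc} to $c=a-r_a=b-r_b$. Your write-up merely makes explicit what the paper leaves implicit (quasinilpotency of $r_a,r_b$ via \eqref{qn}, commutation via the bicommutant property, and the fact that the reverse implication needs no finiteness of $\sigma(a)$).
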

\begin{proof}$\Rightarrow$: The proof of Theorem~\ref{char1} shows that we can write
	\[a=\sum_{i=1}^nap_i\ \ \ \mbox{ and }\ \ \ \ b=\sum_{i=1}^nbp_i\] where $p_i=p(\lambda_i,a)=p(\lambda_i,b)$. It is then elementary that
	\begin{equation}\label{diff}
	a-b=\sum_{i=1}^n(ap_i-\lambda_ip_i)-\sum_{i=1}^n(bp_i-\lambda_ip_i).
	\end{equation}
	Setting the first summation on the right equal to $r_a$, and the second to $r_b$, the result follows.
	
	$\Leftarrow$: With the hypothesis, and using Lemma~\ref{qc} in the end
	\[a-b=r_a-r_b\Rightarrow a-r_a=b-r_b\Rightarrow d(a-r_a,b-r_b)=0\Rightarrow d(a,b)=0.\]
\end{proof}

\begin{corollary}\label{char3}
	Let $A$ be a Banach algebra. If $a,b\in A$ and $\sigma(a)$ is finite, then $d(a,b)=0$ if and only if there exists $c\in A$, commuting with $a$ and $b$, such that both $a-c$ and $b-c$ are quasinilpotent. In particular, if $A$ is semisimple, and $a,b\in\Soc(A)$, then $c$ can be located in $\Soc(A)$, and quasinilpotency can be reduced to nilpotency.
\end{corollary}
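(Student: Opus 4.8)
The plan is to read off both implications from Theorems~\ref{char1} and~\ref{char2}, so that almost no new work is required. For the reverse implication I would argue as follows: suppose such a $c$ exists, and put $r_a=a-c$, $r_b=b-c$. Since $c$ commutes with $a$ and with $b$, the element $r_a$ commutes with $a$ and $r_b$ commutes with $b$, and $a-b=r_a-r_b$; hence $d(a,b)=0$ by the reverse implication of Theorem~\ref{char2}. (Alternatively one can invoke Lemma~\ref{qc} directly: from $a-r_a=c=b-r_b$ and the fact that $r_a,r_b$ commute with $c$ one gets $\rho(a,b)=\rho(c,c)=0$, and symmetrically $\rho(b,a)=0$.)

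For the forward implication, assume $d(a,b)=0$. If $a$ --- and hence $b$ --- is quasinilpotent, take $c=0$. Otherwise write $\sigma(a)=\sigma(b)=\{\lambda_1,\dots,\lambda_n\}$; by the forward direction of Theorem~\ref{char1} the Riesz projections coincide, say $p_i:=p(\lambda_i,a)=p(\lambda_i,b)$, and I would set
\[c=\sum_{i=1}^n\lambda_ip_i.\]
Each $p_i$ lies simultaneously in the bicommutant of $a$ and in the bicommutant of $b$, so $c$ commutes with both $a$ and $b$. Since the $p_i$ are orthogonal idempotents, sum to $1$, and commute with $a$, we have $a-c=\sum_{i=1}^n(a-\lambda_i)p_i$, a sum of commuting quasinilpotents by the Spectral Mapping Theorem; exactly the computation behind \eqref{qn} then gives $\sigma(a-c)=\{0\}$, and likewise $\sigma(b-c)=\{0\}$. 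Thus $c$ has the desired properties.

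For the ``in particular'' clause I would take $A$ semisimple and $a,b\in\Soc(A)$, so that $\sigma(a)$ is automatically finite and the preceding construction is available verbatim. The one delicate point is the spectral value $0$: the Riesz projection $p(0,a)$ need not belong to $\Soc(A)$. However the summand with $\lambda_i=0$ disappears from the definition of $c$, so in fact $c=\sum_{\lambda_i\neq0}\lambda_ip_i$ is a finite linear combination of the projections $p(\lambda_i,a)$ with $\lambda_i\neq0$, each of which is a finite rank element (indeed $\rank p(\lambda_i,a)=m(\lambda_i,a)$ by \cite[Theorem 2.6]{a+m}); hence $c\in\Soc(A)$. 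Since $\Soc(A)$ is a two-sided ideal, $a-c$ and $b-c$ also lie in $\Soc(A)$; being quasinilpotent elements of the socle of a semisimple Banach algebra, they are in fact nilpotent by \cite[Lemma 3.10]{m+r}. This yields the refined statement.

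I do not expect a genuine obstacle: the corollary is essentially a repackaging of Theorems~\ref{char1} and~\ref{char2}, with the candidate $c=\sum_i\lambda_ip(\lambda_i,a)$ already implicit in the element $r_a$ of \eqref{diff}. The only step that requires a moment's attention is the socle refinement, where one must notice that $p(0,a)$ --- which is typically not a finite rank element --- never enters $c$, so that membership of $c$ in $\Soc(A)$ is not in danger.
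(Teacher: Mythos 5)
Your proof is correct and follows essentially the paper's own route: both directions are read off from Theorem~\ref{char2} (with the forward direction's $c=a-r_a=b-r_b$ being exactly your $\sum_i\lambda_ip_i$ from \eqref{diff}), and the socle refinement is handled just as in the paper, via the fact that the Riesz projections at nonzero spectral points lie in $\Soc(A)$ and that quasinilpotents in the socle are nilpotent. No gaps.
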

\begin{proof}
	Suppose $d(a,b)=0$. Using Theorem~\ref{char2}, take $c=a-r_a=b-r_b$. Conversely, if $c$ commutes with $a$ and $b$, and both $a-c$ and $b-c$ are quasinilpotent, then write $a-b=(a-c)-(b-c)$; the result follows from  Theorem~\ref{char2}. For the second part, to find $c\in\Soc(A)$, use the fact that the Riesz projections corresponding to nonzero spectrum points of $a$ and $b$ also belong to $\Soc(A)$. Notice then that any quasinilpotent element in $\Soc(A)$ is in fact nilpotent.
\end{proof}

In \cite[Theorem 3.3]{raub} a special case of Theorem~\ref{char2} appears: $a\in\Soc(A)$ and $b\in A$ are quasinilpotent equivalent, and $a$ assumes its rank at $1$, a property which entails $ap_i=\lambda_ip_i$ for each Riesz projection $p_i=p(\lambda_i,a)$; however $b$ does not necessarily share the property, so that $a\not=b$ is possible. The most general conclusion here is that $a-b$ is a quasinilpotent element commuting with $a$ (in fact $a$ and $b$ commute). Of course, this is in accordance with Theorem~\ref{char2} above; and in fact the result is immediate from \eqref{diff}. It is further clear, as also shown in \cite[Theorem 3.2]{raub}, that if $b$ is in the socle and assumes its rank at $1$, then $a=b$. We conclude this section by extending the results in \cite{raub}, but now in a different direction. The arguments in \cite{raub} relied on Aupetit and Mouton's Diagonalization Theorem: An element $a\in\Soc(A)$ which assumes its rank at $1$ takes the form $a=\lambda_1p_1+\cdots+\lambda_np_n$ where $p_i=p(\lambda_i,p_i)$). The following generalization of \cite[Theorem 2.8]{a+m} was proved in \cite[Theorem 3.1]{brits}:

\begin{theorem}[Generalized Diagonalization Theorem]\label{GDT}
	Let $A$ be a semisimple Banach algebra and $0\not=a\in \Soc(A)$. Then $a$ is a linear combination of mutually orthogonal minimal idempotents if and only if $a$ assumes its rank at a commuting $y\in A$; that is, if and only if there exists $y\in A$ commuting with $a$ such that $\rank(a)=\#\sigma^\prime(ya)$.
\end{theorem}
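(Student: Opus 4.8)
The plan is to prove the two implications separately, beginning with the easy one. Suppose first that $a=\sum_{i=1}^{n}\alpha_i p_i$ with $p_1,\dots,p_n$ mutually orthogonal minimal idempotents; discarding the terms with $\alpha_i=0$ and using $a\neq 0$, we may assume every $\alpha_i\neq 0$, so that $\rank(a)=n$. Choosing any $n$ distinct nonzero scalars $\mu_1,\dots,\mu_n$ and setting $y=\sum_{i=1}^{n}\mu_i\alpha_i^{-1}p_i$, we see that $y$ commutes with $a$ (both being ``diagonal'' with respect to the $p_i$) and that $ya=\sum_{i=1}^{n}\mu_i p_i$, so $\#\sigma^\prime(ya)=n=\rank(a)$; that is, $a$ assumes its rank at the commuting element $y$.

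For the converse, assume $y$ commutes with $a$ and write $n=\rank(a)=\#\sigma^\prime(ya)$. Since $z(ya)=(zy)a$ for all $z$, we have $\rank(ya)\le\rank(a)$, and therefore $n=\#\sigma^\prime(ya)\le\rank(ya)\le\rank(a)=n$; thus $ya$ assumes its rank at $1$. By the Diagonalization Theorem \cite[Theorem 2.8]{a+m}, $ya=\sum_{i=1}^{n}\mu_i e_i$ with the $e_i$ mutually orthogonal minimal idempotents, and --- since the number of nonzero $\mu_i$ equals $\rank(ya)=n=\#\sigma^\prime(ya)$, the number of \emph{distinct} nonzero $\mu_i$ --- the $\mu_i$ are distinct and nonzero (discarding zero coefficients if necessary). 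Hence each $e_i$ is the Riesz projection $p(\mu_i,ya)$ and so lies in the bicommutant of $ya$. Now both $y$ and $a$ commute with $ya$ (using $ay=ya$), so both commute with every $e_i$, and with $e:=e_1+\cdots+e_n$. Since $e_i A e_i=\mathbb{C}e_i$, commutation forces $a e_i=e_i a=\alpha_i e_i$ and $y e_i=e_i y=\beta_i e_i$ for scalars $\alpha_i,\beta_i$; comparing with $(ya)e_i=\mu_i e_i$ yields $\alpha_i\beta_i=\mu_i\neq 0$, so every $\alpha_i$ is nonzero.

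Finally, using that $a$ commutes with $e$, split $a=ea+(1-e)a$ with $ea=\sum_{i=1}^{n}\alpha_i e_i\in eAe$ and $a_1:=(1-e)a(1-e)\in(1-e)A(1-e)$. The rank is additive over the orthogonal corners $eAe$ and $(1-e)A(1-e)$, and $\rank\left(\sum_{i=1}^{n}\alpha_i e_i\right)=n$ because the $\alpha_i$ are nonzero and the $e_i$ are orthogonal rank-one idempotents; hence $n=\rank(a)=n+\rank(a_1)$, forcing $\rank(a_1)=0$ and thus $a_1=0$ (a socle element of rank zero vanishes \cite{a+m}). Therefore $a=\sum_{i=1}^{n}\alpha_i e_i$ is a linear combination of mutually orthogonal minimal idempotents. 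I expect the only genuine technical point to be the additivity of the rank over the two orthogonal corners: subadditivity is routine, and for the reverse inequality one tests $ea+a_1$ against $w_0+w_1$ with $w_0\in eAe$ and $w_1\in(1-e)A(1-e)$ realising $\rank(ea)$ and $\rank(a_1)$ (rescaling $w_0$ so that $\sigma^\prime(w_0\,ea)$ and $\sigma^\prime(w_1 a_1)$ are disjoint), using $w(ea+a_1)=w_0(ea)+w_1 a_1$ and that the primed spectrum of a sum of orthogonal elements is the union of the primed spectra; alternatively, if $a_1\neq 0$ one can directly build a $w$ with $\#\sigma^\prime(wa)\ge n+1$, contradicting $\rank(a)=n$. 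The conceptual heart of the argument, by contrast, is the observation that $y$ commuting with $a$ forces the spectral idempotents of $ya$ --- being Riesz projections, hence in the bicommutant --- to commute with $a$ as well, which is exactly what makes $a$ itself diagonal.
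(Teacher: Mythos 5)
Your argument is correct in substance, but note that the paper itself does not prove Theorem~\ref{GDT}: it is imported verbatim from \cite[Theorem 3.1]{brits}, so there is no in-paper proof to compare against. Your route is the natural one: the forward direction via $y=\sum_i\mu_i\alpha_i^{-1}p_i$ (with rank subadditivity giving $\rank(a)\le n$ and the zero-divisor argument giving the $n$ distinct points of $\sigma^\prime(ya)$) is fine, and the converse is a clean reduction to the Aupetit--Mouton Diagonalization Theorem applied to $ya$, using exactly the right conceptual lever, namely that the Riesz projections of $ya$ lie in its bicommutant and hence commute with $a$ and $y$. Two points deserve to be made explicit rather than implicit. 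First, the minimality of the $e_i$: the Diagonalization Theorem gives $ya=\sum_i\mu_ip(\mu_i,ya)$ with the $\mu_i$ the distinct points of $\sigma^\prime(ya)$, and you need $\rank p(\mu_i,ya)=m(\mu_i,ya)=1$; this does follow, since $\sum_im(\mu_i,ya)\le\rank(ya)=n$ forces every multiplicity to equal $1$, but it is the step that licenses $e_iAe_i=\mathbb Ce_i$ and should be said. Second, the corner-rank step you flag as the only technical point does close as you sketch it: by Jacobson's lemma $\sigma^\prime(xa_1)=\sigma^\prime(w_1a_1)$ with $w_1=(1-e)x(1-e)$, so if $a_1\ne0$ (semisimplicity, i.e.\ the spectral characterization of the radical) some corner element $w_1$ gives $\sigma^\prime(w_1a_1)\ne\emptyset$; testing $a$ against $w_0+w_1$, where $w_0=\sum_i\mu_i\alpha_i^{-1}e_i$ and $w_1$ is rescaled so that $\sigma^\prime(w_0ea)$ and $\sigma^\prime(w_1a_1)$ are disjoint, and using that $uv=vu=0$ implies $\sigma^\prime(u+v)=\sigma^\prime(u)\cup\sigma^\prime(v)$, yields $\#\sigma^\prime\bigl((w_0+w_1)a\bigr)\ge n+1$, contradicting $\rank(a)=n$. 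With those two items spelled out the proof is complete, and it is at least as direct as the argument one finds in the cited source.
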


\begin{theorem}\label{rudi}
	Let $A$ be a semisimple Banach algebra with $a,b\in  \Soc(A)$, and suppose $a$ and $b$ assume their respective ranks on $\comm(a)$ and $\comm(b)$. Then $\rho(a,b)=0$ if and only if $a=b$.
\end{theorem}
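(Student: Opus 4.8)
The plan is to use the Generalized Diagonalization Theorem (Theorem~\ref{GDT}) to reduce both $a$ and $b$ to diagonal form, and then to apply the Riesz-projection characterization of Theorem~\ref{char1}. The reverse implication is trivial, so suppose $\rho(a,b)=0$. By Corollary~\ref{equal} this already gives $d(a,b)=0$, so $a$ and $b$ are genuinely \qne, and in particular $\sigma(a)=\sigma(b)$. If $a$ (hence $b$) is quasinilpotent, then being a nilpotent element of the socle that assumes its rank on its commutant forces $\rank(a)=0$, i.e.\ $a=0$, and likewise $b=0$; so assume $a$ and $b$ are not quasinilpotent and write $\sigma^\prime(a)=\sigma^\prime(b)=\{\lambda_1,\dots,\lambda_n\}$.

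Since $a$ assumes its rank on $\comm(a)$, Theorem~\ref{GDT} says $a$ is a linear combination of mutually orthogonal minimal idempotents; grouping these by eigenvalue and using that the Riesz projections of $a$ are precisely the partial sums over each eigenvalue, one gets $a=\sum_{i=1}^n\lambda_i p(\lambda_i,a)$, and in particular $a\,p(\lambda_i,a)=\lambda_i p(\lambda_i,a)$ for each $i$. The identical argument applied to $b$ gives $b=\sum_{i=1}^n\lambda_i p(\lambda_i,b)$ with $b\,p(\lambda_i,b)=\lambda_i p(\lambda_i,b)$. Now invoke Theorem~\ref{char1}: from $d(a,b)=0$ and finiteness of $\sigma(a)$ we conclude $p(\lambda_i,a)=p(\lambda_i,b)$ for every nonzero $\lambda_i$ (equality of the projections at $0$, if $0\in\sigma(a)$, comes free from the fact that all the Riesz projections sum to $1$). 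Writing $p_i=p(\lambda_i,a)=p(\lambda_i,b)$ we then have
\[
a=\sum_{i=1}^n\lambda_i p_i=b,
\]
which is the desired conclusion. (Alternatively, and more self-containedly, one can bypass Theorem~\ref{char1}: once the diagonal forms are in hand, the hypotheses of Lemma~\ref{Riesz} — namely $ap_i=\lambda_ip_i$ and $bq_i=\lambda_iq_i$ — hold for all nonzero $\lambda_i$, and Lemma~\ref{Riesz} directly yields $p_i=q_i$.)

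The only real subtlety is the passage from ``$a$ is a linear combination of mutually orthogonal minimal idempotents'' to ``$a=\sum_i\lambda_i p(\lambda_i,a)$ with $ap(\lambda_i,a)=\lambda_ip(\lambda_i,a)$'': one must check that when $a=\sum_j \mu_j e_j$ with the $e_j$ mutually orthogonal minimal idempotents, the Riesz projection of $a$ at an eigenvalue $\lambda$ equals $\sum_{\mu_j=\lambda} e_j$. This is a standard holomorphic-functional-calculus computation — $a$ is diagonalizable, so $(\zeta-a)^{-1}=\sum_j(\zeta-\mu_j)^{-1}e_j + \zeta^{-1}(1-\sum_j e_j)$, and integrating around a small circle about $\lambda$ picks out exactly the terms with $\mu_j=\lambda$ — but it is the step that has to be written with some care, and it is where the hypothesis that $a$ \emph{assumes its rank on its commutant} (rather than just being diagonalizable in some weaker sense) is really being used. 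Everything else is a direct assembly of Corollary~\ref{equal}, Theorem~\ref{GDT}, and Theorem~\ref{char1} (or Lemma~\ref{Riesz}).
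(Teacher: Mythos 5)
Your proof is correct and takes essentially the same route as the paper's: upgrade $\rho(a,b)=0$ to $d(a,b)=0$ (Corollary~\ref{equal}), use Theorem~\ref{GDT} to write $a$ and $b$ diagonally and observe that grouping the mutually orthogonal minimal idempotents by eigenvalue yields the Riesz projections, so that $ap(\lambda_i,a)=\lambda_ip(\lambda_i,a)$ and $bp(\lambda_i,b)=\lambda_ip(\lambda_i,b)$. The paper then finishes through the decomposition of Theorem~\ref{char2} (concluding $r_a=r_b=0$) rather than invoking Theorem~\ref{char1}/Lemma~\ref{Riesz} directly as you do, but this is only a difference in packaging, not in substance.
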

\begin{proof} By Theorem\,\ref{GDT} we can write
	\[a=\lambda_1 p_1+\ldots +\lambda_n p_n\]
	and
	\[b=\alpha_1 q_1+\ldots +\alpha_m q_m\]
	where the $p_i$ form a set of mutually orthogonal minimal projections, and the $\lambda_i$ are (not necessarily distinct) scalars. The same statement holds of course for the $q_i$ and $\alpha_i$. The $p_i$ and $q_i$ are not necessarily Riesz projections. However, if some $\lambda_i$ appears more than once in the above representation of $a$, then the total sum of the orthogonal minimal projections with coefficients equal to $\lambda_i$ gives the Riesz projection $p(\lambda_i,a)$. Theorem\,\ref{GDT} thus implies that the Riesz projections are mutually orthogonal, and hence, following the proof of Theorem~\ref{char2}, $r_a=0$. The same argument gives $r_b=0$, and $a=b$ follows from Theorem~\ref{char2}.
\end{proof}
In \cite[Theorem 3.3]{raub} the requirement that $a$ be maximal rank can be relaxed to the weaker ``$a$ assumes it rank on $\comm(a)$":
\begin{theorem}
	Let $A$ be a semisimple Banach algebra with $a\in\Soc (A)$, and suppose $a$ assumes its rank on $\comm (a)$. If $b\in A$, then $d(a,b)=0$ if and only if $a-b$ is quasinilpotent, and $a$ commutes with $b$.
\end{theorem}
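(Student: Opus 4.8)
The plan is to read this off from Theorem~\ref{char2}, once one observes that a socle element attaining its rank on its own commutant is, after grouping by spectral value, genuinely diagonal with respect to \emph{its own} Riesz projections. Note first that $a\in\Soc(A)$ forces $\sigma(a)$ to be finite, so Theorem~\ref{char2} is available.

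For the implication $\Rightarrow$, assume $d(a,b)=0$ and write $\sigma(a)=\sigma(b)=\{\lambda_1,\dots,\lambda_n\}$. By Theorem~\ref{char1} the Riesz projections coincide, $p_i:=p(\lambda_i,a)=p(\lambda_i,b)$, and the proof of Theorem~\ref{char2} gives $a-b=r_a-r_b$ with
\[r_a=\sum_{i=1}^n\bigl(ap_i-\lambda_ip_i\bigr),\qquad r_b=\sum_{i=1}^n\bigl(bp_i-\lambda_ip_i\bigr),\]
where $r_a$ is quasinilpotent and commutes with $a$, and $r_b$ is quasinilpotent and commutes with $b$. The crux is to show $r_a=0$. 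Since $a$ assumes its rank on $\comm(a)$, Theorem~\ref{GDT} yields $a=\mu_1e_1+\cdots+\mu_ke_k$ with the $e_j$ mutually orthogonal minimal idempotents. Collecting the $e_j$ that share a common coefficient $\mu$, one checks exactly as in the proof of Theorem~\ref{rudi} that the resulting idempotent is the Riesz projection $p(\mu,a)$; consequently $ap_i=\lambda_ip_i$ for every $i$, every summand of $r_a$ vanishes, and $r_a=0$. Therefore $a-b=-r_b\in\QN(A)$, and $a=b-r_b$ commutes with $b$ because $r_b$ does.

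For the implication $\Leftarrow$, suppose $a-b\in\QN(A)$ and $ab=ba$. Then $a-b$ commutes with $a$, so applying Theorem~\ref{char2} with $r_a:=a-b$ (quasinilpotent, commuting with $a$) and $r_b:=0$ (quasinilpotent, commuting with $b$) yields $d(a,b)=0$. This direction uses only that $\sigma(a)$ is finite, not the rank hypothesis.

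The one non-routine point is the identification of the grouped idempotents with the Riesz projections of $a$, equivalently that the rank hypothesis forces $r_a=0$; but this is precisely the mechanism already used in Theorem~\ref{rudi}, and since it concerns only the internal structure of $a$ it transfers verbatim to the present setting, where $b$ is merely an arbitrary element quasinilpotent equivalent to $a$. Everything else is bookkeeping inside Theorem~\ref{char2}.
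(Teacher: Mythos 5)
Your proposal is correct and follows essentially the same route as the paper: invoke the decomposition $a-b=r_a-r_b$ from Theorem~\ref{char2} (with the Riesz projections identified via Theorem~\ref{char1}), use the Generalized Diagonalization Theorem exactly as in the proof of Theorem~\ref{rudi} to force $r_a=0$, and conclude $a-b=-r_b\in\QN(A)$ with $a$ commuting with $b$. The only cosmetic difference is in the converse, where the paper directly uses $d(a,b)=r(a-b)=0$ for commuting elements while you route it through the reverse implication of Theorem~\ref{char2}; both are equally valid.
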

\begin{proof}
	If $d(a,b)=0$, then $a-b=r_a-r_b$ with $r_a$ and $r_b$ as in Theorem~\ref{char2}. Since $a$ assumes its rank on $\comm (a)$, the argument in the proof of Theorem~\ref{rudi} shows that $r_a=0$. Hence $a-b=-r_b\in\QN(A)$. Since $r_b$ commutes with $b$ it also follows from $a-b=-r_b$ that $a$ and $b$ commute.
	Conversely, if $a-b$ is quasinilpotent, and  $a$ commutes with $b$, then $d(a,b)=r(a-b)=0$.
\end{proof}

\section{C*-algebras and quasinilpotent equivalence}\label{sec5}
In this section we investigate the effect of elements being quasinilpotent equivalent in C*-algebras.
The first result characterizes elements $b$ in a C*-algebra which are quasinilpotent equivalent to a normal element $a$ with finite spectrum:
\begin{theorem}\label{C*1}
	Let $A$ be a C*-algebra and let $a$ be a normal element of $A$ with finite spectrum. If $b\in A$ then $d(a,b)=0$ if and only if $a-b$ is quasinilpotent, and $a$ commutes with $b$.
\end{theorem}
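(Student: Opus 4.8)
The plan is to handle the two implications separately. The reverse implication is immediate: if $a-b$ is quasinilpotent and $ab=ba$, then $a-b$ commutes with both $a$ and $b$, so by the remark following \eqref{rhoab} one has $\rho(a,b)=\rho(b,a)=r(a-b)=0$, hence $d(a,b)=0$. No difficulty there.

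For the forward implication the idea is to exploit normality to reduce everything to the machinery of Section~\ref{sec4}. First I would diagonalize $a$: since $a$ is normal with finite spectrum $\sigma(a)=\{\lambda_1,\dots,\lambda_n\}$, the continuous functional calculus gives $a=\sum_{i=1}^n\lambda_i p_i$ with $p_i:=p(\lambda_i,a)$ mutually orthogonal projections summing to $1$ and, crucially, $ap_i=\lambda_i p_i$ for each $i$. Next, because $\sigma(a)$ is finite, Theorem~\ref{char1} applies and the hypothesis $d(a,b)=0$ yields $\sigma(b)=\sigma(a)$ together with $p(\lambda_i,b)=p_i$; hence $b=\sum_{i=1}^n bp_i$. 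Then, following the proof of Theorem~\ref{char2} (display \eqref{diff}), I would write $a-b=r_a-r_b$ with $r_a=\sum_i(ap_i-\lambda_i p_i)$ and $r_b=\sum_i(bp_i-\lambda_i p_i)$, where $r_a$ commutes with $a$ and $r_b$ with $b$. The identity $ap_i=\lambda_i p_i$ collapses $r_a$ to $0$, so $a-b=-r_b$.

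To finish I would argue, exactly as in the proof of Theorem~\ref{char1}, that the terms $(b-\lambda_i)p_i$ are pairwise orthogonal commuting quasinilpotents (the $p_i$, being Riesz projections of $b$, lie in the bicommutant of $b$ and annihilate one another), so that $r_b$, and hence $a-b=-r_b$, is quasinilpotent; and since $r_b$ commutes with $b$, the relation $a=b-r_b$ shows $a$ commutes with $b$. This closes the argument.

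I do not foresee a genuine obstacle here — Theorems~\ref{char1} and~\ref{char2} do the heavy lifting. The one point requiring care is recognizing that it is \emph{normality}, not a rank hypothesis, that delivers the identity $ap_i=\lambda_i p_i$: a general element with finite spectrum (for instance a nilpotent Jordan block) need not be diagonalizable, and this identity is precisely what forces $r_a=0$ in \eqref{diff}, playing the role that ``$a$ assumes its rank on $\comm(a)$'' played in the socle analogues of this theorem.
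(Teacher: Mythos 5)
Your proposal is correct and follows essentially the same route as the paper: diagonalize the normal element $a$ via the functional calculus to get $ap_i=\lambda_ip_i$ with $p_i=p(\lambda_i,a)$, then invoke the Section~\ref{sec4} machinery (Theorem~\ref{char1} and the decomposition \eqref{diff} of Theorem~\ref{char2}) to conclude $r_a=0$, so $a-b=-r_b$ is quasinilpotent and commutes with $b$. The paper compresses exactly this argument into the citation of \cite[Corollary 6.2.8]{aup} and the remarks following Corollary~\ref{char3}, so your write-up just makes those steps explicit.
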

\begin{proof}$\Leftarrow$: If $a-b\in\QN(A)$ and $ab=ba$, then, by the comment following \eqref{rhoab}, $d(a,b)=r(a-b)=0$. \\
	$\Rightarrow$: If $\sigma (a)=\{\lambda_1,\ldots ,\lambda_k\}$ then in view of \cite[Corollary 6.2.8]{aup} there exist self adjoint orthogonal projections $p_1,\ldots ,p_k$ in the commutative closed subalgebra generated by $1$, $a$ and $a^*$, such that $p_1+\ldots +p_k=1$ and $a={\lambda_1}p_1+\ldots +{\lambda_k}p_k$. Moreover, the proof of the aforementioned result shows that $p_j=p(\lambda_j,a)$ for each $j$. Since the projections are orthogonal, $ap_i={\lambda_i}p_i$ for $1\leq i\leq k$. The result is then clear from the comments following Corollary~\ref{char3}.
\end{proof}

\begin{theorem}
	Let $A$ be a C*-algebra. If $a$ and $b$ are normal elements of $A$, and if $0$ is the only possible accumulation point of $\sigma(a)$, then $d(a,b)=0$ if and only if $a=b$.
\end{theorem}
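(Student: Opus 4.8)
The reverse implication is trivial, so the plan is to show that $d(a,b)=0$ forces $a=b$, by combining the continuous functional calculus for normal elements with Lemma~\ref{Riesz} applied at each nonzero spectral value. Since $d(a,b)=0$ we have $\sigma(a)=\sigma(b)$, and a compact subset of $\mathbb C$ whose only possible accumulation point is $0$ is countable with the nonzero points forming a finite or null sequence: for each $n$ the set $\{\lambda\in\sigma(a):|\lambda|\ge 1/n\}$ is a compact subset of $\sigma(a)$ with no accumulation point, hence finite. So write $\sigma^\prime(a)=\sigma^\prime(b)=\{\lambda_1,\lambda_2,\dots\}$ with $|\lambda_j|\to 0$.

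Next I would extract what normality buys. Because each $\lambda_j$ is isolated in $\sigma(a)$, the indicator $\chi_{\{\lambda_j\}}$ is continuous on $\sigma(a)$, and it is standard (cf. the use of \cite[Corollary 6.2.8]{aup} in Theorem~\ref{C*1}) that under the continuous functional calculus for the normal element $a$ the Riesz projection $p_j:=p(\lambda_j,a)$ is identified with $\chi_{\{\lambda_j\}}(a)$; in particular $p_j$ is a self-adjoint idempotent and $(a-\lambda_j)p_j=0$, i.e. $ap_j=\lambda_j p_j$. Since $b$ is normal with the same spectrum, $q_j:=p(\lambda_j,b)$ is likewise self-adjoint with $bq_j=\lambda_j q_j$. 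For each $j$ the hypotheses of Lemma~\ref{Riesz} then hold ($d(a,b)=0$; $\lambda_j\neq 0$ is isolated in $\sigma(a)=\sigma(b)$; $ap_j=\lambda_j p_j$ and $bq_j=\lambda_j q_j$), so $p_j=q_j$ for every $j$.

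It remains to recover $a$ and $b$ from their Riesz projections. On $\sigma(a)$ the identity function equals $\sum_j\lambda_j\chi_{\{\lambda_j\}}$, with uniform convergence because $\chi_{\{\lambda_j\}}(0)=0$ and the tails are bounded by $\sup_{j>N}|\lambda_j|\to 0$; applying the functional calculus homomorphism gives $a=\sum_j\lambda_j p_j$ (a norm-convergent series), and likewise $b=\sum_j\lambda_j q_j$. Since $p_j=q_j$ for all $j$, we conclude $a=b$.

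I expect the only real subtlety to be the bookkeeping forced by the possibly infinite spectrum: legitimizing the enumeration with $|\lambda_j|\to 0$, the continuity of each $\chi_{\{\lambda_j\}}$ on $\sigma(a)$, and the norm convergence of $\sum_j\lambda_j p_j$ to $a$ — all consequences of $\sigma(a)$ being compact with $0$ its only accumulation point. The genuinely C*-algebraic input (self-adjointness of the Riesz projections and the eigenrelation $ap_j=\lambda_j p_j$) is exactly what was used finitely often in Theorem~\ref{C*1}, now invoked countably often.
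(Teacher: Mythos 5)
Your argument is correct and follows essentially the same route as the paper: identify the Riesz projections of the normal elements via the continuous functional calculus to get $ap_j=\lambda_j p_j$ and $bq_j=\lambda_j q_j$, invoke Lemma~\ref{Riesz} at each nonzero isolated spectral value to conclude $p_j=q_j$, and control the tail by $\sup_{j>n}|\lambda_j|\to 0$. The only difference is presentational — you write $a$ and $b$ as norm-convergent series $\sum_j\lambda_j p_j$, whereas the paper splits $a=af_n(a)+ah_n(a)$ and estimates $\|a-b\|$ directly — which amounts to the same estimate.
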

\begin{proof} If $\sigma(a)$ is finite, then the result follows as a special case of Theorem~\ref{C*1}. So we may assume $\sigma(a)=\sigma(b)=\{\lambda_1,\lambda_2,\dots\}\cup\{0=\lambda_0\}$ where $(\lambda_j)$ is a sequence of nonzero complex numbers converging to $0$. For each $n\in\mathbb N$ define $f_n(\lambda)$ and $h_n(\lambda)$ on $\sigma(a)$, by
\begin{displaymath}
f_n(\lambda_j)=\left\{\begin{array}{ll}1 & 0<j\leq n\\
0 & j>n \mbox{ or } j=0.\end{array}\right.
\end{displaymath}
and
\begin{displaymath}
h_n(\lambda_j)=\left\{\begin{array}{ll}0 & 0<j\leq n\\
1 & j>n \mbox{ or } j=0.\end{array}\right.
\end{displaymath}
Using the Continuous Functional Calculus throughout the remainder of this proof we have:
\[f_n(a)=p(\lambda_1,a)+\cdots+p(\lambda_n,a)\]
and
\[f_n(b)=p(\lambda_1,b)+\cdots+p(\lambda_n,b).\]
Notice further that, for each $i\in\mathbb N$, $ap(\lambda_i,a)-\lambda_ip(\lambda_i,a)$ is quasinilpotent and normal, from which it follows that
$ap(\lambda_i,a)=\lambda_ip(\lambda_i,a)$. Similarly $bp(\lambda_i,b)=\lambda_ip(\lambda_i,b)$. For each $n\in\mathbb N$ we can write
\[a=af_n(a)+ah_n(a)\]
and
\[b=bf_n(b)+bh_n(b).\]
But now, using Lemma~\ref{Riesz}, we see that $af_n(a)=bf_n(b)$ for each $n\in\mathbb N$. Finally, from the fact that
$ah_n(a)$ and $bh_n(b)$ are both normal, we have
\begin{align*}\|a-b\|&=\|ah_n(a)-bh_n(b)\|\\&
\leq \|ah_n(a)\|+\|bh_n(b)\|\\&
=r(ah_n(a))+r(bh_n(b))\\&
=2\max\{|\lambda_j|:j>n\}
\end{align*}
holds for all $n\in\mathbb N$. Letting $n\rightarrow\infty$ we obtain $\|a-b\|=0$.
\end{proof}

\bibliographystyle{amsplain}

\end{document}